\theoremstyle{plain}
\newtheorem{theorem}{Theorem}[section]
\newtheorem{corollary}[theorem]{Corollary}
\newtheorem{definition}[theorem]{Definition}
\def\ep{\varepsilon}
\begin{document}

\vskip 0.5cm

\title[generalized tracially approximated ${\rm C^*}$-algebras] {generalized tracially approximated ${\rm C^*}$-algebras}
\author{George A. Elliott, Qingzhai Fan, and Xiaochun Fang}

\address{George A. Elliott\\ Department of Mathematics\\ Univesity of Toronto\\ Toronto\\ Ontario \\ Canada  \hspace{0.1cm} M5S~ 2E4}
\email{elliott@math.toronto.edu}

\address{Qingzhai Fan\\ Department of Mathematics\\  Shanghai Maritime University\\
Shanghai\\China
\\  201306 }
\email{qzfan@shmtu.edu.cn}

\address{Xiaochun Fang\\ Department of Mathematics\\ Tongji
University\\
Shanghai\\China
\\200092}
\email{xfang@tongji.edu.cn}

\thanks{{\bf Key words}  ${\rm C^*}$-algebras,  tracial approximation, Cuntz semigroup.}
\thanks{2000 \emph{Mathematics Subject Classification\rm{:}} 46L35, 46L05, 46L80}

\begin{abstract} In this paper, we introduce  some classes of  generalized  tracial approximation  ${\rm C^*}$-algebras.  Consider the class of  unital ${\rm C^*}$-algebras which   are tracially $\mathcal{Z}$-absorbing (or have tracial nuclear dimension at most $n$, or have the property $\rm SP$, or are  $m$-almost divisible). Then $A$  is  tracially $\mathcal{Z}$-absorbing  (respectively, has tracial nuclear dimension at most $n$, has the property $\rm SP$, is weakly ($n, m$)-almost divisible) for  any  simple  unital ${\rm C^*}$-algebra $A$ in the corresponding  class of generalized tracial approximation ${\rm C^*}$-algebras.
As an application, let $A$ be an infinite-dimensional unital simple ${\rm C^*}$-algebra, and let $B$ be a centrally large subalgebra of $A$. If $B$ is  tracially $\mathcal{Z}$-absorbing, then $A$ is  tracially $\mathcal{Z}$-absorbing. This result  was obtained by  Archey, Buck, and  Phillips in \cite{AJN}.
\end{abstract}

\maketitle

\section{Introduction}
The Elliott program for the classification of amenable
 ${\rm C^*}$-algebras might be said to have begun with the ${\rm K}$-theoretical
 classification of AF algebras in \cite{E1}. A major next step was the classification of simple
  AH algebras without dimension growth (in the real rank zero case see \cite{E6}, and in the general case
 see \cite{GL}).
  This led eventually to the classification of  simple separable amenable ${\rm C^*}$-algebras with finite nuclear dimension in the UCT class (see \cite{KP}, \cite{PP2}, \cite{EZ5}, \cite{GLN1}, \cite{GLN2}, \cite{TWW1}, \cite{EGLN1},  \cite {GL2}, and \cite{GL3}).

A crucial intermediate step was Lin's axiomatization of  Elliott-Gong's decomposition theorem for  simple AH algebras of real rank zero (classified by Elliott-Gong in \cite{E6}) and Gong's decomposition theorem (\cite{G1}) for simple AH algebras (classified by Elliott-Gong-Li in \cite{GL}). For this purpose, Lin introduced the concepts of  TAF and TAI (\cite{L0} and \cite{L1}). (A weaker version of the property TAF had been introduced by Popa in \cite{PO}.) Instead of assuming inductive limit structure, Lin started with a certain abstract (tracial) approximation property. Elliott and  Niu in \cite{EZ} considered  this  notion of
 tracial approximation by  other classes of unital ${\rm C^*}$-algebras than the finite-dimensional ones  for TAF and the interval algebras for TAI.
  In \cite{EZ}, Elliott and Niu, and in \cite{EFF}, Elliott, Fan, and Fang  showed that certain properties of  ${\rm C^*}$-algebras in a given class $\Omega$ are inherited by a simple unital ${\rm C^*}$-algebra
in the class ${\rm TA}\Omega$.

 Large and centrally  large subalgebras were introduced in \cite{P3} and \cite{AN} by Phillips and  Archey   as  abstractions of Putnam's orbit breaking subalgebra of the crossed product algebra ${\rm C}^*(X,\mathbb{Z},\sigma)$ of the Cantor set by a minimal homeomorphism in \cite{P}.

In \cite{AN}, Archey and  Phillips showed that  if $B$ is centrally large in $A$ and $B$ has stable rank one, then so  also does $A$.
 In \cite{AJN},  Archey, Buck, and  Phillips  proved that if $A$ is a simple infinite-dimensional stably finite unital  ${\rm C^*}$-algebra and $B\subseteq A$ is a centrally  large subalgebra, then $A$ is tracially $\mathcal{Z}$-absorbing  in the sense of  \cite{HO} if, and only if, $B$ is tracially $\mathcal{Z}$-absorbing.

Inspired  by  centrally large subalgebras and tracial approximation ${\rm C^*}$-algebras, we introduce a class of  generalized   tracial approximation ${\rm C^*}$-algebras.  The notion  generalizes  both Archey and Phillips's centrally large subalgebras and Lin's notion of tracial approximation.

Let  $\Omega$ be a class of unital ${\rm C^*}$-algebras. We define as follows  the class
 of ${\rm C^*}$-algebras which can be weakly  tracially approximated by ${\rm C^*}$-algebras in $\Omega$, and denote this class by ${\rm WTA}\Omega$.

\begin{definition}\label{def:1.1}  A  simple unital ${\rm C^*}$-algebra $A$ will be  said to belong to the class ${\rm WTA}\Omega$  if, for any
 $\varepsilon>0$, any finite
subset $F\subseteq A$, and any  non-zero element $a\geq 0$, there
exist a  projection $p\in A$, an  element $g\in A$ with $0\leq g\leq 1$,
  and a unital ${\rm C^*}$-subalgebra $B$ of $A$ with
$g\in B, 1_B=p$, and $B\in \Omega$, such that

$(1)$  $(p-g)x\in_{\varepsilon} B, ~ x(p-g)\in_{\varepsilon} B$, for all $x\in  F$,

$(2)$ $\|(p-g)x-x(p-g)\|<\varepsilon$, for all $x\in F$,

$(3)$ $1-(p-g)\precsim a$ (see Section 2), and

$(4)$ $\|(p-g)a(p-g)\|\geq \|a\|-\varepsilon$.
\end{definition}

It  follows  from the definitions and by the proof of Theorem 4.1 of \cite{EZ} that if $A$ is a simple   unital  ${\rm C^*}$-algebra and  $A\in {\rm  TA}\Omega$ (Definition \ref{def:2.5}), then $A\in {\rm WTA}\Omega$. Furthermore, if $\Omega=\{B\}$, and $B\subseteq A$ is a centrally large subalgebra of $A$ (Definition \ref{def:2.6}), then $A\in {\rm WTA}\Omega$.

 In Theorem 3.9 of  \cite{N2}, Niu shows that if $(X,\sigma,\Gamma)$ is  a dynamical system  ($X$ compact metrizable and $\Gamma$ countable amenable) with the (URP), then  the crossed product ${\rm C^*}$-algebra can be weakly tracially approximated in a non-unital sense by (not necessarily unital) homogeneous ${\rm C^*}$-algebras with dimension ratio almost dominated by the mean dimension of $(X,\sigma,\Gamma)$. Let $\Omega= \{
C^{\dag} : C \cong \bigoplus_{s=1}^N \mathrm{M}_{K_s} (\rm{C}_0(Z_s)) ,
\, N \in \mathbb{Z}_{\geq 0},\}$,   where $\text{each } Z_s$   is a locally compact Hausdorff space,  and  $C^{\dag}$ is the unitization of $C$.
 By the proof of Theorem 3.9 of  \cite{N2},  one can   show that  Niu's crossed product ${\rm C^*}$-algebra  belongs to the class ${\rm WTA}\Omega$.

The Rokhlin property in ergodic theory was adapted to
the context
 of von Neumann algebras by  Connes in \cite{AC}. It was adapted by  Herman
 and  Ocneanu  for UHF-algebras in \cite{HHO1}. In  \cite{K3} Kishimoto, in \cite{M1} and \cite{M2} Izumi, and in \cite{R11} R{\o}rdam
 considered the Rokhlin property in  a much more general ${\rm C^*}$-algebra context. More recently,  Osaka and Phillips  studied
actions of a  finite group and of the  group $\mathbb{Z}$ of integers on  certain simple ${\rm C^*}$-algebras with a modified  Rokhlin property
  in \cite{OP} and \cite{P2}.

   In  \cite{OP}, Osaka and Phillips showed  the following  result (a part of Lemma 2.5 of \cite{OP}): Let $A$ be a stably finite simple unital $C^*$-algebra with real rank zero such that the order on projections in matrix algebras  over $A$ is determined by traces (strict comparison of projections).
 Let $\alpha\in {\rm Aut}(A)$ have   the tracial Rokhlin property. Then for every finite set $F\subseteq {\rm C}^*(\mathbb{Z}, A, \alpha)$, every $\varepsilon>0$, every $N\in \mathbb{N}$,
  and every non-zero positive element $z\in A$, there exist a projection $p\in A$, a unital subalgebra $D\subseteq p{\rm C}^*(\mathbb{Z}, A, \alpha)p$ with $1_D=p$,  and a projection $q\in D$ such that

$(1)$ $qx\in_\ep D, ~ xq\in_\ep D$, for
all $x\in  F$,

 $(2)$ $1-q\precsim z$, and

 $(3)$ $ N\langle 1-q\rangle\leq \langle q\rangle$ (see Section 2).

Inspired by   this result, in this paper, we introduce another class of  generalized tracial approximation ${\rm C^*}$-algebras. Let  $\Omega$ be a class of unital C$^*$-algebras. We define as follows  the class
 of C$^*$-algebras which can be   tracially almost
approximated by C$^*$-algebras in $\Omega$, and denote this class by TAA$\Omega$.
\begin{definition} \label{def:1.3} A  simple unital ${\rm C^*}$-algebra $A$ will be  said to belong to the class ${\rm TAA}\Omega$ if, for any $\varepsilon>0$, any finite
subset $F\subseteq A$,  any  non-zero element $a\geq 0$,  there
exist a  projection $p\in A$,   a ${\rm C^*}$-subalgebra $B$ of $A$ with
$1_B=p$ and $B\in \Omega$, and  a projection $q\in B$  such that

$(1)$  $qx\in_{\varepsilon} B,~ xq\in_{\varepsilon} B$, for all $x\in  F$,

$(2)$ $1-q\precsim a$ (see Section 2), and

$(3)$ $\|qaq\|\geq \|a\|-\varepsilon$.

\end{definition}
In this paper, we shall prove the following five  results:

 Let $\Omega$ be a class of  unital  ${\rm C^*}$-algebras with the property $\rm SP$.  Then  $A$  has the  property $\rm SP$  for  any  simple unital  ${\rm C^*}$-algebra $A\in{\rm  WTA}\Omega.$  (Theorem \ref{thm:3.1}.)

 Let $\Omega$ be a class of   unital
${\rm C^*}$-algebras which   are tracially $\mathcal{Z}$-absorbing
(Definition \ref{def:2.3}).   Then  $A$  is  tracially $\mathcal{Z}$-absorbing  for  any   simple unital  ${\rm C^*}$-algebra $A\in{\rm  WTA}\Omega.$ (Theorem \ref{thm:3.4}.)
\vskip 0.1cm
  Let $\Omega$ be a class of   unital
${\rm C^*}$-algebras with tracial nuclear dimension at most $n$
 (Definition \ref{def:2.4}).  Then  $A$  has tracial nuclear dimension at most $n$ for  any  simple  unital ${\rm C^*}$-algebra $A\in{\rm  WTA}\Omega.$ (Theorem \ref{thm:3.7}.)
\vskip 0.1cm
Let $\Omega$ be a class of   unital
${\rm C^*}$-algebras  which are  $m$-almost divisible (Definition \ref{def:2.7}).   Let  $A\in{\rm WTA}\Omega$ be a  simple unital   stably finite ${\rm C^*}$-algebra such that for any $n\in \mathbb{N}$ the
 ${\rm C^*}$-algebra  $\rm{M}$$_n(A)$  belongs to the class ${\rm WTA}\Omega$. Then $A$  is weakly $(2, m)$-almost divisible (Definition \ref{def:2.8}). (Theorem \ref{thm:3.10}.)
\vskip 0.1cm
  Let $\Omega$ be a class of   unital
${\rm C^*}$-algebras  which are  $m$-almost divisible.   Let  $A\in{\rm TAA}\Omega$ be a simple unital stably finite  ${\rm C^*}$-algebra such that for any $n\in \mathbb{N}$ and any  unital hereditary
 ${\rm C^*}$-subalgebra $D$ of  $\rm{M}$$_n(A)$, $D$  belongs to the class ${\rm TAA}\Omega$. Then
   $A$  is weakly $(2, m)$-almost divisible. (Theorem \ref{thm:3.11}.)
\vskip 0.1cm

As applications, the following known results follow from these results.

 Let $A$ be a simple  unital ${\rm C^*}$-algebra, and let $B$ be a centrally large
 subalgebra of $A$. If $B$ is tracially $\mathcal{Z}$-absorbing, then $A$ is tracially $\mathcal{Z}$-absorbing. This result was obtained by  Archey, Buck, and  Phillips in \cite{AJN}.

 Let $\Omega$ be a class of   unital
${\rm C^*}$-algebras which are tracially $\mathcal{Z}$-absorbing. Then $A$ is tracially $\mathcal{Z}$-absorbing  for  any  simple  unital ${\rm C^*}$-algebra $A\in {\rm TA}\Omega$. This result was obtained by Elliott, Fan, and Fang in \cite{EFF}.

 Let $A$ be a simple  unital ${\rm C^*}$-algebra, and let $B$ be a centrally large
 subalgebra of $A$. If $B$ has tracial nuclear dimension at most $n$, then $A$ has  tracial nuclear dimension at most $n$. This result was obtained by  Zhao, Fang,  and Fan in \cite{ZF}

Let $\Omega$ be a class of    unital
${\rm C^*}$-algebras which have tracial nuclear dimension at most $n$. Then $A$ has tracial nuclear dimension at most $n$  for  any  simple  unital ${\rm C^*}$-algebra $A\in {\rm TA}\Omega$. This result was obtained by Fan and Yang in \cite{Q9}.

\section{Preliminaries and definitions}

Recall that  a $\rm C^*$-algebra  $A$ has the  property $\rm SP$ if every non-zero
hereditary $\rm C^*$-subalgebra of $A$ contains a non-zero projection.

Let $A$ be a ${\rm C^*}$-algebra, and let ${\rm M}_n(A)$ denote the ${\rm C^*}$-algebra of  $n\times n$ matrices with entries
elements of $A$. Let ${\rm M}_{\infty}(A)$ denote the algebraic inductive  limit of the sequence  $({\rm M}_n(A),\phi_n),$
where $\phi_n:{\rm M}_n(A)\to {\rm M}_{n+1}(A)$ is the canonical embedding as the upper left-hand corner block.
 Let ${\rm M}_{\infty}(A)_+$ (respectively, ${\rm M}_{n}(A)_+$) denote
the positive elements of ${\rm M}_{\infty}(A)$ (respectively, ${\rm M}_{n}(A)$).  Given $a, b\in {\rm M}_{\infty}(A)_+,$
one says  that $a$ is Cuntz subequivalent to $b$ (written $a\precsim b$) if there is a sequence $(v_n)_{n=1}^\infty$
of elements of ${\rm M}_{\infty}(A)$ such that $$\lim_{n\to \infty}\|v_nbv_n^*-a\|=0.$$
One says that $a$ and $b$ are Cuntz equivalent (written $a\sim b$) if $a\precsim b$ and $b\precsim a$. We  shall write $\langle a\rangle$ for the Cuntz equivalence class of $a$.

The object ${\rm Cu}(A):={(A\otimes {{K}}})_+/\sim$
 will be called the Cuntz semigroup of $A$. (See \cite{CEI}.)  Observe that  any $a, b\in {\rm M}_{\infty}(A)_+$
are Cuntz equivalent  to orthogonal  elements $a', b'\in {\rm M}_{\infty}(A)_+$ (i.e., $a'b'=0$),   and so ${\rm Cu}(A)$ becomes  an ordered  semigroup   when equipped with the addition operation
$$\langle a\rangle+\langle b\rangle=\langle a+ b\rangle$$
 whenever $ab=0$, and the order relation
$$\langle a\rangle\leq \langle b\rangle\Leftrightarrow a\precsim b.$$

Given $a$ in ${\rm M}_{\infty}(A)_+$ and $\varepsilon>0,$ we denote by $(a-\varepsilon)_+$ the element of ${\rm C^*}(a)$ corresponding (via the functional calculus) to the function $f(t)={\max (0, t-\varepsilon)},~~ t\in \sigma(a)$. By the functional calculus, it follows in a straightforward manner that $((a-\varepsilon_1)_+-\varepsilon_2)_+=(a-(\varepsilon_1+\varepsilon_2))_+.$

Let $0<\varepsilon<1$ be two positive numbers. Define
$$
f_{\varepsilon}(t)=\left\{
  \begin{array}{ll}
   1 & if\ t\geq\varepsilon,\\
  {(2t-\varepsilon)}/{\varepsilon} & if\ \varepsilon/2< t \leq \varepsilon,\\
  0 & if\ 0\leq t\leq \varepsilon/2.
  \end{array} \right.
 $$
The following facts are  well known.
\begin{theorem}(\cite{PPT}, \cite{HO}, \cite{P3}, \cite{RW}.) \label{thm:2.1} Let $A$ be a ${\rm C^*}$-algebra.

 $(1)$ Let $a,~ b\in A_+$ and   $\varepsilon>0$  be such that
$\|a-b\|<\varepsilon$.  Then there is a contraction $d$ in $A$ with $(a-\varepsilon)_+=dbd^*$.

$(2)$ Let $a,~ p$ be positive elements in ${\rm M}_{\infty}(A)$ with $p$ a projection. If $p\precsim a,$ then there is $b$ in ${\rm M}_{\infty}(A)_+$ such that  $bp=0$ and $b+p\sim a$.

 $(3)$ Let $a$ be a  positive element  of $A$
not Cuntz equivalent to a projection. Let  $\delta>0$,  and let $f\in C_0(0,1]$ be a non-negative function with $f=0$ on $(\delta,1),$  $f>0$ on $(0,\delta)$,
and $\|f\|=1$.  Then $f(a)\neq 0$
and  $(a-\delta)_++f(a)\precsim a.$

$(4)$ Let $a, b\in A$ satisfy $0\leq a\leq b$. Let $\varepsilon\geq 0$.
Then $(a-\varepsilon)_+\precsim(b-\varepsilon)_+$ (Lemma 1.7 of \cite{P3}).
\end{theorem}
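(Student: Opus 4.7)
My plan is to derive each of the four facts by concrete continuous functional calculus manipulations, using the identity $(xx^{*}-\varepsilon)_{+}\sim(x^{*}x-\varepsilon)_{+}$ and the Douglas factorization as the main tools. Parts (2) and (4) both ultimately reduce to part (1), so (1) is the central step. For (1), I would introduce $r_{\varepsilon}(t):=(t-\varepsilon)_{+}/t$ (extended continuously by $0$ at $0$), so that $r_{\varepsilon}(a)\,a=(a-\varepsilon)_{+}$; setting $s:=r_{\varepsilon}(a)^{1/2}$ yields $sas=(a-\varepsilon)_{+}$, whence $y:=sbs$ satisfies $\|y-(a-\varepsilon)_{+}\|\leq\|s\|^{2}\|a-b\|$. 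A second application of the same trick, now applied to the pair $(y,(a-\varepsilon)_{+})$---whose spectra are close in norm and concentrated away from a neighbourhood of $0$---yields a contraction $w$ with $wyw^{*}=(a-\varepsilon)_{+}$, so that $d:=ws$ is the required contraction satisfying $dbd^{*}=(a-\varepsilon)_{+}$.

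For (4), I would follow the approach of Lemma~1.7 of \cite{P3}: the inequality $a\leq b$ gives, by Douglas, a contraction $c$ (a priori in $A^{**}$) with $a^{1/2}=cb^{1/2}$, so
\[
(a-\varepsilon)_{+}=\bigl(cb^{1/2}(cb^{1/2})^{*}-\varepsilon\bigr)_{+}\sim\bigl(b^{1/2}c^{*}cb^{1/2}-\varepsilon\bigr)_{+}.
\]
Since $c^{*}c\leq 1$, one has $b^{1/2}c^{*}cb^{1/2}\leq b$, and a direct computation with $h(t):=(t-\varepsilon)_{+}/t$ (extended by $0$) gives $h(b)^{1/2}(b^{1/2}c^{*}cb^{1/2})h(b)^{1/2}\leq (b-\varepsilon)_{+}$; combining this with part (1) applied at cutoff $\varepsilon/2$ to close the gap between $b^{1/2}c^{*}cb^{1/2}$ and its cutoff yields $(a-\varepsilon)_{+}\precsim(b-\varepsilon)_{+}$.

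For (2), the relation $p\precsim a$ produces $v_{n}\in {\rm M}_{\infty}(A)$ with $v_{n}av_{n}^{*}\to p$; since $p$ is a projection, $\sigma(v_{n}av_{n}^{*})$ has a gap near $1/2$ for $n$ large, so functional calculus on $v_{n}av_{n}^{*}$ produces a projection $p'\in C^{*}(v_{n}av_{n}^{*})\subseteq\overline{aAa}$ Murray--von Neumann equivalent to $p$. A spectral splitting of $a$ relative to $p'$ then yields $b$ with $bp=0$ and $b+p\sim a$. For (3), if $a$ is not Cuntz equivalent to a projection, then $0$ is a non-isolated point of $\sigma(a)$ (otherwise the spectral projection onto $\sigma(a)\setminus\{0\}$ would lie in $C^{*}(a)$ and exhibit $a$ as Cuntz equivalent to a projection); hence $\sigma(a)\cap(0,\delta)\neq\emptyset$, giving $f(a)\neq 0$, while $f(a)(a-\delta)_{+}=0$ by the disjoint supports of $f$ and $(\cdot -\delta)_{+}$. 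The commutative Cuntz comparison criterion inside $C^{*}(a)\cong C_{0}(\sigma(a)\setminus\{0\})$ then yields $(a-\delta)_{+}+f(a)\precsim a$, since the open support of the left-hand side lies inside $\{a>0\}$. The principal obstacle is (1): producing a genuine contraction $d\in A$ (not merely in $A^{**}$) with an \emph{exact} identity rather than an approximation requires careful bookkeeping of cutoffs and a second pass through the functional-calculus construction; once (1) is in hand, parts (2)--(4) follow by standard spectral manipulations.
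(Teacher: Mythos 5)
First, a point of comparison: the paper does not prove Theorem 2.1 at all --- it is stated as a list of well-known facts with citations to \cite{PPT}, \cite{HO}, \cite{P3}, and \cite{RW} --- so your proposal has to stand on its own rather than be measured against an in-paper argument. Your sketches of (2) and (3) are essentially the standard arguments and are sound. In (4) the Douglas step is vacuous as written: if $a^{1/2}=cb^{1/2}$ then $b^{1/2}c^{*}cb^{1/2}=(cb^{1/2})^{*}(cb^{1/2})=a$, so your displayed equivalence reads $(a-\varepsilon)_{+}\sim(a-\varepsilon)_{+}$; the part of your argument that actually carries (4) is the cut-down $h(b)^{1/2}ah(b)^{1/2}\leq (b-\varepsilon)_{+}$ together with the estimate $\|a-a^{1/2}h(b)a^{1/2}\|\leq\varepsilon$ (since $t\min(1,\varepsilon/t)\leq\varepsilon$), followed by part (1) at cutoffs $\varepsilon+\eta$ and a passage to the supremum over $\eta>0$; that route does work, but it leans entirely on (1).

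The genuine gap is in (1), which you rightly identify as the load-bearing step. Having set $s=r_{\varepsilon}(a)^{1/2}$ and $y=sbs$, you only know $\|y-(a-\varepsilon)_{+}\|\leq\|a-b\|<\varepsilon$, and you propose to extract a contraction $w$ with the \emph{exact} identity $wyw^{*}=(a-\varepsilon)_{+}$ by ``a second application of the same trick,'' justified by the claim that the spectra of $y$ and $(a-\varepsilon)_{+}$ are ``concentrated away from a neighbourhood of $0$.'' That claim is false: for $A=C[0,1]$ and $a(t)=t$ the spectrum of $(a-\varepsilon)_{+}$ is $[0,1-\varepsilon]$, which accumulates at $0$, and nothing constrains $\sigma(sbs)$ near $0$ either. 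Without that claim the second step is circular --- applying the very statement being proved to the estimate $\|y-(a-\varepsilon)_{+}\|<\varepsilon$ yields only $(a-2\varepsilon)_{+}=wyw^{*}$. The standard proof (Lemma 2.2 of \cite{RW}, going back to Kirchberg and R{\o}rdam) proceeds differently: it converts norm-closeness into the order inequality $b\geq a-\|a-b\|$, exploits the positive gap $\varepsilon-\|a-b\|>0$ to make $g(a)$ a contraction, where $g(t)^{2}=(t-\varepsilon)_{+}/(t-\|a-b\|)$ for $t>\varepsilon$ and $g(t)=0$ otherwise, deduces $(a-\varepsilon)_{+}=g(a)(a-\|a-b\|)g(a)\leq g(a)bg(a)$, and then runs a Douglas-type factorization of $(a-\varepsilon)_{+}^{1/2}$ against $(g(a)bg(a))^{1/2}$, with an explicit norm-convergent approximating sequence to show the resulting contraction lies in $A$ and not merely in $A^{**}$. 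Your write-up needs this (or an equivalent) mechanism in place of the second pass; as it stands, parts (2) and (4) also inherit the gap because they invoke (1).
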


Winter and Zacharias  introduced the notion of  nuclear dimension for  ${\rm C^*}$-algebras in \cite{WW3}.

\begin{definition}(\cite{WW3}.)\label{def:2.2} Let $A$ be a ${\rm C^*}$-algebra, $m\in {\mathbb{N}}$.
 A completely  positive contraction  $\varphi:F\to A$ is  $m$-decomposable (where $F$ is a
finite dimensional  ${\rm C^*}$-algebra), if there is  a decomposition $F=F^{(0)}\oplus F^{(1)}
\oplus\cdots \oplus F^{(m)}$ such that the restriction $\varphi^{(i)}$ of $\varphi$ to $F^{(i)}$
has order zero (which means preserves orthogonality, i.e.,
 $\psi(e)\psi(f)=0$ for all $e, f\in {\rm M}_n$ with $ef=0$), for each $i\in \{0,\cdots,$$ m\}$,  and we say $\varphi$ is
$m$-decomposable with respect to  the decomposition $F=F^{(0)}\oplus F^{(1)}
\oplus\cdots \oplus F^{(m)}$.
 A has nuclear dimension $m$,  written ${\rm dim_{nuc}}(A)=m$, if $m$ is the least integer such that
the following condition holds: For any finite subset $G\subseteq A$ and $\varepsilon>0$, there is
a finite-dimensional completely  positive   approximation $(F,\varphi, \psi)$
for $G$ to within $\varepsilon$ (i.e., $F$ is finite-dimensional, $\psi: A\to F$  and
$\varphi:F\to A$ are completely positive,  and $\|\varphi\psi(b)-b\|<\varepsilon$
for any $b\in G$) such that $\psi$ is a   contraction, and $\varphi$ is $m$-decomposable with
 completely positive contraction order zero components $\varphi^{(i)}$. If no such $m$ exists, we write
${\rm dim_{nuc}}(A)=\infty$.
 \end{definition}

Hirshberg and  Orovitz  introduced  the notion of  tracial $\mathcal{Z}$-absorption  in \cite{HO}.

\begin{definition}(\cite{HO}.)\label{def:2.3} We say a unital ${\rm C^*}$-algebra $A$ is tracially $\mathcal{Z}$-absorbing if $A\neq {{\mathbb{C}}}$,
and for any finite set $F\subseteq A,$ $\varepsilon>0,$   non-zero positive element $a\in A$,
and $n\in {{\mathbb{N}}},$ there is  a completely positive order zero  contraction $\psi: {\rm M}_n\to A$, where  order zero  means preserving  orthogonality, i.e.,
 $\psi(e)\psi(f)=0$ for all $e,~f\in {\rm M}_n$ with $ef=0$, such that
the following properties hold:

$(1)$ $1-\psi(1)\precsim a,$ and

$(2)$ for any normalized element $x\in {\rm M}_n$ (i.e., with $\|x\|=1$)  and any $y\in F$ we have $\|\psi(x)y-y\psi(x)\|<\varepsilon$.
\end{definition}

Note that this  property implies that either $A=0$ or $\rm {dim}(A)=\infty$.

Inspired by  Hirshberg and  Orovitz's  tracial $\mathcal{Z}$-absorption in \cite{HO}, Fu introduced a  notion of  tracial nuclear dimension in his doctoral dissertation  \cite{FU} (see also  \cite{FL}), and he showed  that finite tracial nuclear dimension implies  tracial $\mathcal{Z}$-absorption for a separable, exact, simple  unital ${\rm C^*}$-algebra with non-empty tracial state space.

\begin{definition}(\cite{FU}.) \label{def:2.4} A unital ${\rm C^*}$-algebra $A$  is said to  have tracial nuclear dimension at most  $m$, written  ${\rm Trdim_{nuc}}(A)\leq m$, if for any
$\varepsilon>0$, any finite subset ${F}\subseteq A$, and any
non-zero positive element $a$ of $A$, there exist a  ${\rm C^*}$-subalgebra $D$ of $A$ with   ${\rm dim_{nuc}}(D) \leq m$ (Definition \ref {def:2.2}),  a contractive completely positive linear map $\varphi:A\to A$ and a contractive completely positive linear map $\psi: A\to D$
 such that

$(1)$ $\varphi(1)\precsim a$, and

$(2)$ $\|x-\varphi(x)-\psi(x)\|<\varepsilon$, for any $x\in F$.

\end{definition}

Let $\Omega$ be a class of unital ${\rm C^*}$-algebras. Then the class
 of simple  separable  ${\rm C^*}$-algebras which can be tracially
approximated by ${\rm C^*}$-algebras in $\Omega$, denoted by  ${\rm TA}\Omega$,  is
defined as follows.

\begin{definition}(\cite{EZ}.)\label{def:2.5} A  simple unital ${\rm C^*}$-algebra $A$ is said to      belong to the class ${\rm TA}\Omega$ if,   for any
 $\ep>0,$ any finite
subset $F\subseteq A,$ and any non-zero element $a\geq 0,$  there
are a  projection $p\in A$, and a ${\rm C^*}$-subalgebra $B$ of $A$ with
$1_B=p$ and $B\in \Omega$, such that

$(1)$ $\|xp-px\|<\ep$, for all $x\in  F$,

$(2)$ $pxp\in_\ep B$, for
all $x\in  F$, and

 $(3)$ $1-p\precsim a$.
\end{definition}

Remark: If $\Omega$ is a class of unital ${\rm C^*}$-algebras, by the proof of Theorem 4.1 of  \cite{EZ}, if $A$ is a  simple   unital  ${\rm C^*}$-algebra and  $A\in {\rm  TA}\Omega$ (Definition \ref{def:2.5}), then $A\in {\rm WTA}\Omega$  (Definition \ref {def:1.1}).
 If $\Omega$ is a class of  unital ${\rm C^*}$-algebras then the class  ${\rm TA}\Omega$ is contained in  the class ${\rm TAA}\Omega$
of Definition \ref {def:1.3}. (In particular one has Corollaries
\ref {cor:3.2},  \ref {cor:3.5}, and  \ref {cor:3.8}, below.)

Centrally large and stably centrally  large subalgebras were introduced in  \cite{AN} by Archey and Phillips.

\begin{definition}(\cite{AN}.)\label{def:2.6}
Let $A$ be an infinite-dimensional simple unital ${\rm C^*}$-algebra.
A unital ${\rm C^*}$-subalgebra $B \subseteq A$ is said to be centrally
large in $A$ if
for every $m \in {\mathbb{N}}$,
$a_1, a_2, \ldots, a_m \in A$,
$\ep > 0$, $x \in A_{+}$ with $\| x \| = 1$,
and $y \in B_{+}\setminus \{ 0 \}$,
there are $c_1, c_2, \ldots, c_m \in A$ and $g \in B$
such that the following conditions hold.

$(1)$ $0 \leq g \leq 1$.

$(2)$ For $j = 1, 2, \ldots, m$ we have
$\| c_j - a_j \| < \varepsilon$.

$(3)$ For $j = 1, 2, \ldots, m$ we have
$(1 - g) c_j \in B$.

$(4)$ $g \precsim_B y$ and $g \precsim_A x$.

$(5)$ $\| (1 - g) x (1 - g) \| > 1 - \varepsilon$.

$(6)$ For $j = 1, 2, \ldots, m$ we have
$\| g a_j - a_j g \| < \varepsilon$.
\end{definition}

Recall from Section $1$ that if a simple unital ${\rm C^*}$-algebra $A$ has a centrally
large  ${\rm C^*}$-subalgebra $B$, then $A$ belongs to
the class ${\rm WTA}\Omega$ with $\Omega=\{B\}$.
(In particular one has Corollaries
 \ref {cor:3.3}, \ref {cor:3.6}, and  \ref {cor:3.9}, below.)

The property  of   $m$-almost divisibility  was  introduced by Robert and Tikuisis in \cite{RT}.

\begin{definition}(\cite{RT}.)\label{def:2.7} Let $m\in \mathbb{N}$. We say that $A$ is  $m$-almost divisible if for each $a\in {\rm M}_{\infty}(A)_+,$ $k\in \mathbb{N}$,  and $\varepsilon>0,$ there exists $ b\in{\rm M}_{\infty}(A)_+$ such that $k\langle b\rangle\leq \langle a\rangle$ and $\langle (a-\varepsilon)_+\rangle\leq (k+1)(m+1)\langle b\rangle$.
\end{definition}

\begin{definition}\label{def:2.8} Let $n,~ m\in \mathbb{N}$. We shall say that $A$ is weakly   ($n, m$)-almost divisible if for each $a\in {\rm M}_{\infty}(A)_+,$ $k\in \mathbb{N}$,  and $\varepsilon>0,$ there exists $ b\in{\rm M}_{\infty}(A)_+$ such that $k\langle b\rangle\leq n\langle a\rangle$ and $\langle (a-\varepsilon)_+\rangle\leq (k+1)(m+1)\langle b\rangle$.
\end{definition}

Note that if $A$ has the property of either Definition \ref {def:2.7} or
Definition \ref {def:2.8} then so also does any matrix algebra over $A$.

The following two theorems are Lemma 1.7 and Lemma 1.8 of \cite{AJN}.

Theorem \ref{thm:2.10} will be used in the proof of Theorem \ref {thm:3.4}.

\begin{theorem}(\cite{AJN}.)\label{thm:2.9}
For every $\varepsilon>0$ there is $\delta>0$ such that the following statement holds. Let $A$ be a
${\rm C^*}$-algebra,  $B\subseteq A$  a ${\rm C^*}$-subalgebra,
 $n$  a non-zero integer, $\varphi_0:{\rm M}_n\to A$ a  completely positive contractive order zero map, and  $x\in B$ such that

$(1)$ $0\leq x\leq 1$,

$(2)$ with $(e_{j,k}), j,k=1,2,\cdots, n$  the standard system of matrix units for ${\rm M}_n$, we have
$\|\varphi_0(e_{j,k})x-x\varphi_0(e_{j,k})\|<\varepsilon$ for  $j,k=1,2,\cdots, n$, and

$(3)$ $\varphi_0(e_{j,k})x\in_{\varepsilon}B$.

Then there is a completely positive contractive order zero map $\varphi: {\rm M}_n\to B$ such that for all
$z\in {\rm M}_n$, with $\|z\|\leq 1$, we have $\|\varphi_0(z)x-\varphi(z)\|<\varepsilon$.
\end{theorem}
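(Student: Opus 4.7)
The plan is to construct an approximately CPC order zero map $\tilde\varphi\colon M_n\to B$ whose matrix-unit images are close to $\varphi_0(e_{j,k})x$, and then perturb $\tilde\varphi$ to an honest CPC order zero map into $B$ via a stability result for CPC order zero maps out of $M_n$.

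First, I would use hypothesis (3) to choose $b_{j,k}\in B$ with $\|b_{j,k}-\varphi_0(e_{j,k})x\|<\delta$, and replace $b_{j,k}$ by $(b_{j,k}+b_{k,j}^*)/2$ to arrange $b_{k,j}=b_{j,k}^*$. Extend $e_{j,k}\mapsto b_{j,k}$ linearly to a $*$-preserving $\tilde\varphi\colon M_n\to B$; then $\|\tilde\varphi(z)-\varphi_0(z)x\|\le n^2\delta\|z\|$. To verify that $\tilde\varphi$ is approximately CPC order zero, I would invoke the Winter--Zacharias structure theorem to write $\varphi_0(z)=h\pi(z)$ with $h=\varphi_0(1_{M_n})$ a positive contraction and $\pi\colon M_n\to \mathcal{M}(C^*(\varphi_0(M_n)))$ a $*$-homomorphism commuting with $h$. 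Polynomial approximation of the square root turns (2) into approximate commutation of $\varphi_0(e_{j,k})$ with $x^{1/2}$, so $\varphi_0(z)x$ is $O(\delta)$-close to the CPC map $z\mapsto x^{1/2}\varphi_0(z)x^{1/2}$, making $\tilde\varphi$ approximately CPC. For approximate orthogonality, use (2) and the exact order-zero property of $\varphi_0$: for $j\neq k$,
\[
\tilde\varphi(e_{j,j})\tilde\varphi(e_{k,k})\approx \varphi_0(e_{j,j})\,x\,\varphi_0(e_{k,k})\,x\approx\varphi_0(e_{j,j})\varphi_0(e_{k,k})\,x^2=0.
\]

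Next, I would invoke a stability lemma for CPC order zero maps from $M_n$: for each $n$ and $\eta>0$ there is $\delta'=\delta'(\eta,n)>0$ such that any $*$-preserving linear contraction $\tilde\varphi\colon M_n\to C$ with $\|\tilde\varphi(e_{j,j})\tilde\varphi(e_{k,k})\|<\delta'$ for $j\ne k$ and $\tilde\varphi(e_{j,j})\ge -\delta'$ admits a CPC order zero map $\varphi\colon M_n\to C^*(\tilde\varphi(M_n))$ with $\|\varphi(e_{j,k})-\tilde\varphi(e_{j,k})\|<\eta$. This follows from the weak semiprojectivity of the cone $C_0((0,1])\otimes M_n$ (Loring) together with the Winter--Zacharias correspondence between CPC order zero maps from $M_n$ and $*$-homomorphisms of this cone. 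Applied to $C=B$ with $\eta=\varepsilon/2$, taking $\delta$ small enough that $n^2\delta<\varepsilon/2$ and that the approximate-order-zero estimates above fall within $\delta'(\varepsilon/2,n)$, one obtains the desired $\varphi$.

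The main obstacle is this stability step: passing from an approximate CPC order zero map to an exact one while keeping the range inside $B$. The containment in $B$ comes for free because the stability construction uses continuous functional calculus within $C^*(\tilde\varphi(M_n))\subseteq B$; the subtle part is the quantitative control of $\delta'(\eta,n)$, but this is standard from semiprojectivity of the cone.
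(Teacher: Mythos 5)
First, a point of reference: the paper itself gives no proof of this statement --- it is quoted verbatim (with some typographical slips, e.g.\ $\varepsilon$ where $\delta$ should appear in hypotheses $(2)$ and $(3)$) as Lemma 1.7 of \cite{AJN}, so the only comparison available is with that source. Your overall strategy --- push $\varphi_0(\cdot)x$ into $B$ on matrix units using $(3)$, check that the resulting map approximately satisfies the order zero structure using $(2)$ and the Winter--Zacharias decomposition $\varphi_0=h\pi$, and then correct to an exact completely positive contractive order zero map into $B$ by stability of the cone $C_0((0,1])\otimes {\rm M}_n$ --- is exactly the standard route and is essentially the argument of \cite{AJN}. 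The observation that the corrected map stays inside $B$ because the perturbation takes place in $C^*(\tilde\varphi({\rm M}_n))\subseteq B$ is also the right one.

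There is, however, a genuine gap in the stability lemma as you have formulated it: a $*$-preserving linear contraction $\tilde\varphi$ with $\|\tilde\varphi(e_{j,j})\tilde\varphi(e_{k,k})\|<\delta'$ for $j\neq k$ and $\tilde\varphi(e_{j,j})\gtrsim 0$ need \emph{not} be close to an order zero map, because these conditions place no constraint on the off-diagonal images. Concretely, the map $\psi:{\rm M}_2\to{\rm M}_2$ with $\psi(e_{1,1})=e_{1,1}$, $\psi(e_{2,2})=e_{2,2}$, $\psi(e_{1,2})=\psi(e_{2,1})=0$ is even completely positive and contractive and satisfies your hypotheses with $\delta'=0$, yet any order zero $\varphi$ with $\varphi(e_{j,j})\approx e_{j,j}$ has $\varphi=h\pi$ with $h\approx 1$ and $\pi$ a unital $*$-homomorphism, forcing $\|\varphi(e_{1,2})\|\approx 1$; so $\psi$ is at distance roughly $1$ from every order zero map. (This also shows that checking orthogonality only on the diagonal matrix units does not characterize order zero even among cpc maps.) What the semiprojectivity/stability machinery actually requires is approximate satisfaction of a full finite presentation of $C_0((0,1])\otimes{\rm M}_n$ --- in particular relations tying $\tilde\varphi(e_{j,k})$ to $\tilde\varphi(e_{j,j})$ and $\tilde\varphi(e_{k,k})$. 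Your setup does contain the ingredients to verify all such relations (since $\tilde\varphi$ is $O(n^2\delta)$-close to $x^{1/2}h\pi(\cdot)x^{1/2}$ with $x^{1/2}$ approximately commuting with everything, any $*$-polynomial relation satisfied by $\iota\otimes e_{j,k}$ in the cone is approximately satisfied by the $b_{j,k}$), but this verification, which is the real content of the lemma, is missing from your write-up. With the stability lemma restated correctly and those relations checked, the argument goes through.
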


\begin{theorem}(\cite{AJN}.)\label{thm:2.10}
For every $\varepsilon>0$ and non-zero positive integer $n$,  there is $\delta>0$ such that the following statement holds.
Whenever $A,B$, $\varphi_0:{\rm M}_n\to A$, and $x\in B$ satisfy the conditions of  Theorem \ref{thm:2.9}, and in addition
$A$ is unital and $B$  contains  the unit of $A$, there exists a completely positive contractive order zero map  $\varphi:{\rm M}_n\to A$  such that

$(1)$ $\|\varphi_0(z)x-\varphi(z)\|<\varepsilon$, for all $z\in {\rm M}_n$ with $\|z\|\leq 1$, and

$(2)$ $1-\varphi(1)\precsim (1-x)\oplus (1-\varphi_0(1))$.

\end{theorem}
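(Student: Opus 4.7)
My plan is to apply Theorem \ref{thm:2.9} to produce a completely positive contractive order zero map $\varphi: {\rm M}_n \to B \subseteq A$ satisfying (1), and then to establish (2) by comparing $1 - \varphi(1)$ to the auxiliary element $y := x^{1/2}\varphi_0(1)x^{1/2} \in A_+$. Given $\varepsilon > 0$ and $n$, I would first choose $\delta > 0$ small enough so that Theorem \ref{thm:2.9} applies with tolerance $\delta' < \varepsilon$ and so that the commutator and functional calculus estimates below yield errors less than any prescribed $\eta > 0$. Theorem \ref{thm:2.9} then gives $\varphi: {\rm M}_n \to B$ with $\|\varphi_0(z)x - \varphi(z)\| < \delta'$ for all contractions $z \in {\rm M}_n$; viewed as a map into $A$ via the inclusion $B \hookrightarrow A$, this $\varphi$ satisfies (1).

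For (2), set $e := \varphi_0(1)$. Hypothesis (2) of Theorem \ref{thm:2.9}, summed over the diagonal matrix units, gives $\|ex - xe\| < n\delta$. Continuity of $t \mapsto t^{1/2}$ applied to $x$ then bounds $\|ex - x^{1/2}ex^{1/2}\|$ in terms of $\delta$, so combined with $\|\varphi(1) - ex\| < \delta'$ we obtain $\|\varphi(1) - y\| < \eta$ with $\eta$ arbitrarily small (by choice of $\delta$). The key operator identity
\[
1 - y \;=\; (1 - x) + x^{1/2}(1 - e)x^{1/2} \;\leq\; (1 - x) + (1 - \varphi_0(1)),
\]
combined with the standard Cuntz fact that a sum of positive elements is subequivalent to their direct sum, yields $1 - y \precsim (1 - x) \oplus (1 - \varphi_0(1))$ in ${\rm M}_2(A)$. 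Applying Theorem \ref{thm:2.1}(1) to $\|(1 - \varphi(1)) - (1 - y)\| < \eta$ produces $(1-\varphi(1)-\eta)_+ \precsim 1 - y \precsim (1-x) \oplus (1 - \varphi_0(1))$.

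The main obstacle is to upgrade this approximate subequivalence to the exact inequality $1 - \varphi(1) \precsim (1-x) \oplus (1 - \varphi_0(1))$ required in (2). Since $\eta$ is determined once $\delta$ is chosen, one cannot simply let $\eta \to 0$. My plan for closing this gap is to refine $\varphi$ using the Winter--Zacharias structure theorem for completely positive order zero maps: writing $\varphi(a) = h\rho(a)$ with $h = \varphi(1)$ and $\rho: {\rm M}_n \to A$ a $*$-homomorphism whose image commutes with $h$, I replace $\varphi$ by $\widetilde{\varphi}(a) := g(h)\rho(a)$ for a continuous function $g: [0,1] \to [0,1]$ with $g(0) = 0$, chosen so that $1 - g(h) \precsim 1 - y$ holds without tolerance (using the operator inequality $h \geq y - \eta$ together with a suitable spectral cutoff). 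Arranging that $\widetilde{\varphi}$ stays close enough to $\varphi$ on contractions of ${\rm M}_n$ that (1) still holds with tolerance $\varepsilon$ requires careful choice of $g$, and is the most delicate bookkeeping in the argument.
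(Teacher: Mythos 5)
The paper does not actually prove this statement; it is quoted verbatim as Lemma 1.8 of \cite{AJN}, and your proposal follows essentially the same route as the proof there: obtain $\varphi_1$ from Theorem \ref{thm:2.9}, compare $\varphi_1(1)$ with $y=x^{1/2}\varphi_0(1)x^{1/2}$, and then remove the resulting tolerance by post-composing with a function via the order zero functional calculus. You correctly identify the one genuine difficulty (upgrading $(1-\varphi(1)-\eta)_+\precsim 1-y$ to an exact subequivalence), and your proposed fix is the right one. Two points should be repaired. First, the displayed inequality $x^{1/2}(1-\varphi_0(1))x^{1/2}\leq 1-\varphi_0(1)$ is false in general (compression by a positive contraction does not preserve order against the uncompressed element); what is true, and all you need, is $x^{1/2}(1-\varphi_0(1))x^{1/2}=cc^*\sim c^*c=(1-\varphi_0(1))^{1/2}x(1-\varphi_0(1))^{1/2}\leq 1-\varphi_0(1)$ with $c=x^{1/2}(1-\varphi_0(1))^{1/2}$, after which the identity $1-y=(1-x)+x^{1/2}(1-\varphi_0(1))x^{1/2}$ and the sum-versus-direct-sum fact give $1-y\precsim(1-x)\oplus(1-\varphi_0(1))$. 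Second, the last step you leave as ``delicate bookkeeping'' closes cleanly with the explicit choice $g(t)=\min\bigl((1-\eta)^{-1}t,\,1\bigr)$: then $\widetilde{\varphi}=g(h)\rho(\cdot)$ is still cpc order zero into $B$, $\|\widetilde{\varphi}(z)-\varphi_1(z)\|\leq\|g(h)-h\|\leq\eta/(1-\eta)$ preserves conclusion $(1)$, and $1-\widetilde{\varphi}(1)=\bigl(1-(1-\eta)^{-1}h\bigr)_+\precsim\bigl(1+\tfrac{\eta}{1-\eta}-(1-\eta)^{-1}y\bigr)_+\sim\bigl(1-\lambda y\bigr)_+$ with $\lambda\geq 1$, which is dominated by $1-y$ by functional calculus in $C^*(y,1)$; here one uses $h\geq y-\eta$ and the standard fact that $a\leq b$ for self-adjoint elements implies $a_+\precsim b_+$. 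With those two repairs your outline is a complete and correct proof.
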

\section{The main results}
\begin{theorem}\label{thm:3.1}
 Let $\Omega$ be a class of unital
${\rm C^*}$-algebras which have  the property $\rm SP$. Then $A$ has the property $\rm SP$ for  any   simple  unital ${\rm C^*}$-algebra $A\in {\rm WTA}\Omega$.
\end{theorem}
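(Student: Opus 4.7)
The plan is to take a non-zero hereditary $C^*$-subalgebra $H\subseteq A$, extract a positive norm-one element $a\in H$, use WTA$\Omega$ to produce a compression $(p-g)a(p-g)$ that lies almost inside some $B\in\Omega$ and still has norm close to one, invoke SP of $B$ to find a non-zero projection whose Cuntz class is dominated by $a$, and finally transport it back into $\overline{aAa}\subseteq H$ by a polar-decomposition argument.

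More concretely, I would pick $a\in H_{+}$ with $\|a\|=1$, assuming without loss of generality that $a$ is not itself a projection (otherwise $a\in H$ is already the desired projection). Fix $\varepsilon>0$ small and apply Definition~\ref{def:1.1} with $F=\{a\}$ and with $a$ itself playing the role of the distinguished non-zero positive element. This yields a projection $p\in A$, a positive contraction $g\in B$ with $g\leq p=1_{B}$, and a unital $C^*$-subalgebra $B\in\Omega$ satisfying (1)--(4). Since $p-g\in B$, condition (1) supplies some $b\in B$ with $\|(p-g)a(p-g)-b\|<\varepsilon$; replacing $b$ by the positive part of its self-adjoint part, I may assume $b\in B_{+}$ with $\|(p-g)a(p-g)-b\|<2\varepsilon$, and then condition (4) forces $\|b\|\geq 1-3\varepsilon$, so that $b_{0}:=(b-3\varepsilon)_{+}\in B_{+}$ is non-zero. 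Theorem~\ref{thm:2.1}(1) gives $b_{0}\precsim(b-2\varepsilon)_{+}\precsim(p-g)a(p-g)\precsim a$ in $A$; and the SP of $B$ delivers a non-zero projection $q\in\overline{b_{0}Bb_{0}}$, so that $q\precsim a$ in $A$.

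The only step that is not purely formal is the final transfer: converting the projection $q\in A$ with $q\precsim a$ into an actual non-zero projection inside $\overline{aAa}\subseteq H$. This is the standard fact that a projection Cuntz-below $a$ is Murray--von Neumann equivalent to a projection of $\overline{aAa}$; writing $q=\lim_{n}v_{n}av_{n}^{*}$ and setting $w_{n}:=v_{n}a^{1/2}$, for large $n$ one has $w_{n}w_{n}^{*}\approx q$, so polar decomposition (or functional calculus applied to $w_{n}w_{n}^{*}$ together with the identity $w_{n}^{*}w_{n}\in\overline{a^{1/2}Aa^{1/2}}=\overline{aAa}$) produces a partial isometry $u$ with $uu^{*}$ Murray--von Neumann equivalent to $q$ and $u^{*}u$ a non-zero projection in $\overline{aAa}\subseteq H$. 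I expect this transfer to be the only real subtlety; everything else is bookkeeping with $\varepsilon$'s and Theorem~\ref{thm:2.1}.
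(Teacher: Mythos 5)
Your argument is correct, and its overall architecture --- compress $a$ by $p-g$, approximate the compression by a positive $b\in B_+$, use condition $(4)$ of Definition \ref{def:1.1} to see that $(b-3\varepsilon)_+\neq 0$, and invoke the property $\rm SP$ of $B$ --- is the same as the paper's. Where you genuinely diverge is the transfer of the resulting projection back into the hereditary subalgebra. You route it through the Cuntz semigroup: $q\precsim b_0\precsim (b-2\varepsilon)_+\precsim (p-g)a(p-g)\precsim a$, followed by the standard lemma that a projection Cuntz-subequivalent to $a$ is Murray--von Neumann equivalent to a projection of $\overline{aAa}$; your sketch with $w_n=v_na^{1/2}$ is the right proof of that lemma (for large $n$ the element $qw_nw_n^*q$ is invertible in $qAq$, so $u=(w_nw_n^*)^{-1/2}w_n$ gives $uu^*=q$ and $u^*u\in a^{1/2}Aa^{1/2}\subseteq\overline{aAa}$). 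The paper never invokes Cuntz subequivalence in this proof: it uses condition $(2)$ of Definition \ref{def:1.1} (the approximate commutation of $p-g$ with $a$, which your route does not need) to arrange that $b$ simultaneously approximates $(p-g)a(p-g)$ and $a^{1/2}(p-g)^2a^{1/2}$; since the projection $q\in\overline{(b-\varepsilon)_+D(b-\varepsilon)_+}$ satisfies $f_{\varepsilon}(b)q=q$, the element $f_{\varepsilon}(a^{1/2}(p-g)^2a^{1/2})\,q\,f_{\varepsilon}(a^{1/2}(p-g)^2a^{1/2})$ lies in $\overline{aAa}$ and is within $3\varepsilon$ of $q$, so functional calculus yields the desired non-zero projection directly. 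Your version is a bit more modular (the transfer step is a quotable standard lemma) and uses fewer of the hypotheses; the paper's is more self-contained and avoids polar decomposition. Either way the proof is complete.
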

\begin{proof}
Let $B$ be  a non-zero hereditary ${\rm C^*}$-subalgebra  of $A$. We must show
that  $B$   contains a non-zero projection.  Choose a positive element $a$ of $B$
of norm one.

Given   ${\varepsilon}>0$, with $f_{\varepsilon}$ as above,  there exists $\delta_2>0$  satisfying   Lemma 2.5.11 (2) of \cite{L2}.

With  $F=\{a\},$ and any $\varepsilon'>0,$
since
$A\in{\rm WTA}\Omega$,  there
exist a  projection $p\in A$,  an element $g\in A$ with $0\leq g\leq 1$,  and a $\rm C^*$-subalgebra $D$ of $A$ with $g\in D$ and
$1_D=p$, such that  $D$ has the  property  $\rm SP$  and

$(1)$ $(p-g)a\in_{\varepsilon'} D,~ a(p-g)\in_{\varepsilon'} D$,

$(2)$ $\|(p-g)a-a(p-g)\|<\varepsilon'$, and

 $(3)$ $\|(p-g)a(p-g)\|\geq 1-\varepsilon'$.

By $(1)$ and $(2)$, for sufficiently small $\varepsilon'$ (see Lemma 2.5.11 (2) of \cite{L2}), there exists  an element of norm at most one  $b\in D_+$
 such that $\|a{^{1/2}}(p-g)^2a{^{1/2}}-b\|<\delta_2$ and  $\|(p-g)a(p-g)-b\|<\delta_2$.

 Since $\|(p-g)a(p-g)-b\|<\delta_2$ and (by $(3)$) $\|(p-g)a(p-g)\|\geq 1-\varepsilon'$, one has $(b-\varepsilon)_+ \neq 0$ (otherwise, $1-\varepsilon'<\delta_2+\varepsilon$). Since $D$ has  the property $\rm SP$, then there exists a  non-zero projection $q\in \overline{(b-\varepsilon)_+D(b-\varepsilon)_+}$.

Since $f_{\varepsilon}(b)(b-\varepsilon)_+=(b-\varepsilon)_+$, we have $f_{\varepsilon}(b)q=q$.

Since $\|a{^{1/2}}(p-g)^2a{^{1/2}}-b\|<\delta_2$,  by the choice of $\delta_2$,
$$\|f_{\varepsilon}(a{^{1/2}}(p-g)^2a{^{1/2}})-f_{\varepsilon}(b)\|<{\varepsilon}.$$
Hence,
$$\begin{array}{ll}
&\|f_{\varepsilon}(a{^{1/2}}(p-g)^2a{^{1/2}})qf_{\varepsilon}(a{^{1/2}}(p-g)^2a{^{1/2}})-q\|\\
&= \|f_{\varepsilon}(a{^{1/2}}(p-g)^2a{^{1/2}})qf_{\varepsilon}(a{^{1/2}}(p-g)^2a{^{1/2}})-
f_{\varepsilon}(b)qf_{\varepsilon}(b)\|\\
 &<3{\varepsilon}.
\end{array}$$

It follows by the functional calculus that, when ${\varepsilon}$ is  small enough,
there exists a non-zero projection $e$ belonging to the hereditary
${\rm C^*}$-subalgebra of $A$ generated by $a$, and since $a\in B$ and $B$ is
hereditary, $e\in B$.
 This shows that $A$ has the property $\rm SP$.
\end{proof}

\begin{corollary}\label{cor:3.2}
 Let $\Omega$ be a class of  unital
${\rm C^*}$-algebras which have the property $\rm SP$. Then $A$ has  the property $\rm SP$ for  any  simple  unital ${\rm C^*}$-algebra $A\in {\rm TA}\Omega$.
\end{corollary}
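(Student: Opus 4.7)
The plan is to deduce this corollary directly from Theorem \ref{thm:3.1}. The essential observation, already recorded in the remark following Definition \ref{def:2.5} (and flagged in the discussion after Definition \ref{def:1.1}), is that for a simple unital ${\rm C^*}$-algebra one has the containment ${\rm TA}\Omega \subseteq {\rm WTA}\Omega$; this is obtained via the argument of Theorem 4.1 of \cite{EZ}.

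Concretely, I would first recall why ${\rm TA}\Omega \subseteq {\rm WTA}\Omega$: given data $\ep>0$, $F\subseteq A$ finite, and $0\neq a\geq 0$ in $A$ as in Definition \ref{def:1.1}, apply the ${\rm TA}\Omega$ property (Definition \ref{def:2.5}) to obtain a projection $p\in A$ and $B\subseteq A$ with $1_B=p$, $B\in\Omega$, $\|xp-px\|<\ep$, $pxp\in_\ep B$ for $x\in F$, and $1-p\precsim a$. Taking $g=0$ in Definition \ref{def:1.1}, the element $p-g=p$ already satisfies conditions (1)--(3) of that definition (up to standard adjustments of $\ep$). For condition (4), the usual refinement from the proof of Theorem 4.1 of \cite{EZ}---cutting down $a$ first and using simplicity to arrange $\|pap\|\geq \|a\|-\ep$---yields the required norm bound. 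Hence $A\in {\rm WTA}\Omega$.

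With the inclusion in hand, the corollary is immediate: given $A\in {\rm TA}\Omega$ simple and unital, we have $A\in {\rm WTA}\Omega$, and Theorem \ref{thm:3.1} applied to $A$ (using that every $B\in\Omega$ has property $\rm SP$ by hypothesis) tells us that $A$ has property $\rm SP$.

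The only substantive step is the verification of condition (4) of Definition \ref{def:1.1} from the ${\rm TA}\Omega$ hypothesis, since conditions (1)--(3) are essentially built into Definition \ref{def:2.5}. This is a routine adjustment---one replaces the initial positive element $a$ by $(a-\eta)_+$ for some small $\eta$ and exploits simplicity of $A$ together with the freedom to shrink $\ep$---and so presents no real obstacle.
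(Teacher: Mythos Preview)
Your proposal is correct and matches the paper's own proof: the paper simply notes that ${\rm TA}\Omega \subseteq {\rm WTA}\Omega$ (as pointed out in Section~1, via the argument of Theorem~4.1 of \cite{EZ}) and then invokes Theorem~\ref{thm:3.1}. The extra detail you supply about taking $g=0$ and arranging condition~(4) is a faithful unpacking of that remark, not a different approach.
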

\begin{proof} As pointed out in Section $1$, $\rm {TA}\Omega\subseteq \rm {WTA}\Omega$. The
 statement then follows from Theorem \ref{thm:3.1}.
\end{proof}

\begin{corollary}\label{cor:3.3}
 Let $A$ be a non-zero   simple unital ${\rm C^*}$-algebra, and let $B$ be a centrally large
 subalgebra of $A$. If $B$ has the  property $\rm SP$, then $A$ has the property $\rm SP$.
 \end{corollary}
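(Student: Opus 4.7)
The plan is to deduce this directly from Theorem \ref{thm:3.1} by packaging $B$ as the single member of a class $\Omega$. Specifically, I would set $\Omega = \{B\}$, so that $\Omega$ is trivially a class of unital $\rm C^*$-algebras with the property $\rm SP$ (since $B$ is assumed to have $\rm SP$).

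Next, I would invoke the observation recorded just before the statement of Corollary \ref{cor:3.3}: if $A$ is a simple unital $\rm C^*$-algebra and $B \subseteq A$ is a centrally large subalgebra, then $A \in {\rm WTA}\Omega$ with $\Omega = \{B\}$. This translation from Definition \ref{def:2.6} to Definition \ref{def:1.1} is the real content that has already been isolated by the authors in Section 1 (with $g$ in Definition \ref{def:1.1} playing the role of the element $g$ in Definition \ref{def:2.6}, and condition $(4)$ of Definition \ref{def:1.1} coming from condition $(5)$ of Definition \ref{def:2.6} applied to $x = a/\|a\|$). So I would not re-derive this fact but simply cite it.

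With both ingredients in place, Theorem \ref{thm:3.1} applies to $A$ and yields that $A$ has the property $\rm SP$, completing the proof. Since all the work has been done in Theorem \ref{thm:3.1} and in the preceding remark identifying centrally large inclusions as a special case of weak tracial approximation, there is no real obstacle here; the corollary is a one-line application once the framework is set up, and the proof can be written in just a couple of sentences, entirely parallel to the proof of Corollary \ref{cor:3.2}.
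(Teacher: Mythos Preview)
Your proposal is correct and follows exactly the same approach as the paper's proof, which simply reads ``See remark following Definition \ref{def:2.6}.'' You have unpacked this one-line reference correctly: set $\Omega=\{B\}$, use the cited remark to get $A\in{\rm WTA}\Omega$, and apply Theorem \ref{thm:3.1}.
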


\begin{proof} See remark following Definition \ref{def:2.6}.
\end{proof}
\begin{theorem}\label{thm:3.4}Let $\Omega$ be a class of   unital
${\rm C^*}$-algebras which  are tracially $\mathcal{Z}$-absorbing (Definition \ref {def:2.3}).  Then  $A$  is  tracially $\mathcal{Z}$-absorbing, if $A\neq {\mathbb{\mathbb{C}}}$,  for  any   simple unital  ${\rm C^*}$-algebra $A\in{\rm  WTA}\Omega.$\end{theorem}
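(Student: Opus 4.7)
My plan is to reduce tracial $\mathcal{Z}$-absorption of $A$ to that of a subalgebra $B\in\Omega$: squeeze the problem inside $B$ via Definition \ref{def:1.1}, apply tracial $\mathcal{Z}$-absorption of $B$, and then transport the resulting order-zero map back into $A$ using Theorem \ref{thm:2.10} with the cutoff $x=p-g\in B$. Given data $F\subseteq A$ finite (with $\|y\|\leq 1$), $\varepsilon>0$, $a\in A_{+}\setminus\{0\}$, and $n\in\mathbb{N}$, I first use that $A$ is simple and infinite-dimensional (since it contains some $B\in\Omega$, which is tracially $\mathcal{Z}$-absorbing and hence infinite-dimensional) to choose a non-zero positive $a_0\in A_+$ with $3\langle a_0\rangle\leq \langle a\rangle$; this supplies the comparison budget for the three-way split at the end. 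Fix $\delta>0$ from Theorem \ref{thm:2.10} corresponding to tolerance $\varepsilon/3$ and dimension $n$, pick a small $\varepsilon'>0$ to be tightened later, and apply Definition \ref{def:1.1} to $F'=F\cup\{a_0^{1/2}\}$, element $a_0$, and tolerance $\varepsilon'$. This produces a projection $p\in A$, an element $g\in B$ with $0\leq g\leq 1$, and a unital subalgebra $B\in\Omega$ with $1_B=p$, satisfying conditions (1)--(4). Record $b_y,\tilde b_y,b_0\in B$ with $\|b_y-(p-g)y\|<\varepsilon'$, $\|\tilde b_y-y(p-g)\|<\varepsilon'$ for $y\in F$, and $\|b_0-(p-g)a_0^{1/2}\|<\varepsilon'$.

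Setting $c=b_0^* b_0\in B_+$, a direct estimate gives $\|c-(p-g)a_0(p-g)\|<3\varepsilon'$, so Theorem \ref{thm:2.1}(1) yields a cut-down $c_1:=(c-\eta)_+$ with $c_1\precsim a_0$ in $A$. Condition $(4)$ of Definition \ref{def:1.1} gives $\|(p-g)a_0(p-g)\|\geq\|a_0\|-\varepsilon'$, so $\|c\|$ is bounded below and $c_1\neq 0$ for $\eta,\varepsilon'$ small. I then apply tracial $\mathcal{Z}$-absorption of $B$ with finite set $F_B=\{p-g\}\cup\{b_y,\tilde b_y:y\in F\}\subseteq B$, positive element $c_1\in B_+$, tolerance $\varepsilon''<\delta$ small, and dimension $n$, producing a completely positive contractive order-zero map $\psi_B:\mathrm{M}_n\to B$ with $1_B-\psi_B(1)\precsim c_1$ in $B$ (hence in $A$) and approximately commuting with $F_B$. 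Viewed as a map into $A$, $\psi_B$ and $x=p-g$ satisfy the hypotheses of Theorem \ref{thm:2.9}: $0\leq p-g\leq 1$; $\|\psi_B(e_{jk})(p-g)-(p-g)\psi_B(e_{jk})\|<\varepsilon''$ because $p-g\in F_B$; and $\psi_B(e_{jk})(p-g)\in B$ exactly. Theorem \ref{thm:2.10}, applied after harmlessly enlarging $B$ to $B+\mathbb{C}(1_A-p)$ so that it formally contains $1_A$ while keeping $x=p-g$ inside, delivers a cpc order-zero $\psi:\mathrm{M}_n\to A$ with $\|\psi(w)-\psi_B(w)(p-g)\|<\varepsilon/3$ for contractive $w$, and
\[
1_A-\psi(1)\ \precsim\ (1_A-(p-g))\oplus(1_A-p)\oplus(1_B-\psi_B(1))\ \precsim\ a_0\oplus a_0\oplus a_0\ \precsim\ a,
\]
which is condition $(1)$ of Definition \ref{def:2.3}; here I use $1_A-p\leq 1_A-(p-g)\precsim a_0$ (since $g\geq 0$) and $1_B-\psi_B(1)\precsim c_1\precsim a_0$.

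For condition $(2)$ of Definition \ref{def:2.3}, I expand, for $y\in F$ and contractive $w\in\mathrm{M}_n$,
\[
\psi(w)y-y\psi(w)\ \approx\ \psi_B(w)(p-g)y-y\psi_B(w)(p-g)\ =\ \psi_B(w)\bigl[(p-g)y-y(p-g)\bigr]+\bigl[\psi_B(w),y\bigr](p-g).
\]
The first bracket is bounded by $\varepsilon'$ via condition $(2)$ of Definition \ref{def:1.1}. For the second, I replace $(p-g)y$ by $b_y$ and $y(p-g)$ by $\tilde b_y$ at cost $\varepsilon'$ each, then use $\psi_B(w)(p-g)\approx(p-g)\psi_B(w)$ together with $\psi_B(w)b_y\approx b_y\psi_B(w)$ and $\psi_B(w)\tilde b_y\approx\tilde b_y\psi_B(w)$ (all three controlled because $p-g,b_y,\tilde b_y\in F_B$), and finally $b_y\approx\tilde b_y$ (since $(p-g)y\approx y(p-g)$). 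The commutator is thus bounded by a constant multiple of $\varepsilon'+\varepsilon''$; tightening the initial tolerances makes this $<\varepsilon$. The main obstacle I anticipate is the careful manufacturing of the non-zero positive $c_1\in B_+$ that is Cuntz-dominated by $a_0$ inside $A$: this is where condition $(4)$ of Definition \ref{def:1.1} becomes essential and forces the inclusion of $a_0^{1/2}$ in the finite set handed to Definition \ref{def:1.1} at the very start.
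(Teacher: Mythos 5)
Your argument is correct and follows essentially the same route as the paper's proof: apply Definition \ref{def:1.1} to localize the problem in a subalgebra $B\in\Omega$, invoke tracial $\mathcal{Z}$-absorption of $B$ against a non-zero cut-down of the test element manufactured via condition $(4)$, transport the resulting order-zero map back to $A$ with Theorem \ref{thm:2.10}, and split the Cuntz comparison budget into three pieces each subequivalent to $a$ (your $3\langle a_0\rangle\leq\langle a\rangle$ plays the role of the paper's $b_1'\oplus b_2'\oplus b''$ decomposition). The only slip is cosmetic: $c=b_0^*b_0$ approximates $a_0^{1/2}(p-g)^2a_0^{1/2}$ rather than $(p-g)a_0(p-g)$, but these two elements have equal norm and are both Cuntz subequivalent to $a_0$, so your conclusions that $c_1\neq 0$ and $c_1\precsim a_0$ are unaffected.
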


\begin{proof}
We must show that
for any finite set $F=\{a_1,~a_2,~\cdots,~ a_k\}\subseteq A$ (we may assume that $\|a_i\|<1$ for all $1\leq i\leq k$),  any $\varepsilon>0$,  any non-zero positive element $b \in A$,
and any $n\in {{{\mathbb{N}}}},$  there is an order zero contraction $\psi:{\rm M}_n\to A$ such that
the following conditions hold:

$(1)$ $1-\psi(1)\precsim b$, and

$(2)$ for any normalized element $z\in {\rm M}_n$ and any $y\in F,$ we have
$\|\psi(z)y-y\psi(z)\|<\varepsilon.$

Since $A$ is either zero or  infinite-dimensional (see the remark following  Definition \ref{def:2.3}), and is simple, if $A\neq 0$ then by Lemma 2.3 of \cite{P3}, there exist  elements $b', ~b''\in A$  of norm one such that $b'b''=0$,
and $b'+b''\precsim b$. Also there exist elements  ${b_1}',~{b_2}'\in A$  of norm one such that ${b_1'}{b_2'}=0$, $b_1'\sim b_2'$,
and ${b_1'}+{b_2}'\precsim b'$.

 Given $\varepsilon>0$, with $f(t)=t^{1/2}\in C([0,1])$, there exists $\varepsilon'>0$ satisfying  Lemma 2.5.11 (1) of \cite{L2}. Given such $\varepsilon'>0$,  for $G=F\cup\{b'', ~ (b'')^{1/2}\}$,  since $A\in{\rm WTA}\Omega$,  there
exist a  projection $p\in A$, an  element $g\in A$ with $0\leq g\leq 1$,   and a  tracially $\mathcal{Z}$-absorbing ${\rm C}^*$-subalgebra $B$ of $A$ with $g\in B$ and
$1_B=p$  such that

$(1)'$ $(p-g)x\in_{\varepsilon'} B$, $x(p-g)\in_{\varepsilon'} B$, for $x\in G$,

$(2)'$ $\|(p-g)x-x(p-g)\|<\varepsilon'$, for $x\in G$,

$(3)'$ $1-(p-g)\precsim {b_1}'\sim{b_2}'$, and

$(4)'$ $\|(p-g)b''(p-g)\|\geq 1-\varepsilon'$.

 By  $(2)'$ with sufficiently small $\varepsilon'$, by  Lemma 2.5.11 (1) of \cite{L2}, we have

$(5)'$ $\|(p-g)^{1/2}x-x(p-g)^{1/2}\|<\varepsilon,$  for   $x\in G$, and

$(6)'$ $\|(1-(p-g))^{1/2}x-x(1-(p-g))^{1/2}\|<\varepsilon,$ for   $x\in G$.

By $(1)'$, with sufficiently small $\varepsilon'$, together with $(5)'$, there exist  elements $ a_1',~a_2', $ $~\cdots, ~a_k'\in B$ and a positive element $b'''\in B$ such that
 $$\|(p-g)^{1/2}a_i(p-g)^{1/2}-a_i'\|<\varepsilon, ~~~~ {\rm for}~  1\leq i\leq k, \rm {and}$$ $$\|(p-g)^{1/2}b''(p-g)^{1/2}-b'''\|<\varepsilon.$$

From the first inequality, together with $(5)'$ and $(6)'$,  for any $1\leq i\leq k$,  one has
$$\begin{array}{ll}
&\|a_i-a_i'-(1-(p-g))^{1/2}a_i(1-(p-g))^{1/2}\|\\
&\leq\|a_i-(p-g)a_i-(1-(p-g))a_i\|+\|(p-g)a_i-(p-g)^{1/2}a_i(p-g)^{1/2}\|\\
 &+\|(1-(p-g))a_i-(1-(p-g))^{1/2}a_i(1-(p-g))^{1/2}\|\\
 &+\|(p-g)^{1/2}a_i(p-g)^{1/2}-a_i'\|\\
  &<\varepsilon+\varepsilon+\varepsilon=3\varepsilon \hspace{0.8cm}(\textbf{3.4.1}).
\end{array}$$

From the second inequality,  by $(1)$ of  Theorem \ref{thm:2.1}, one has

$(7)'$ ~~$(b'''-\varepsilon)_+\precsim (p-g)^{1/2}b''(p-g)^{1/2}.$

By $(4)'$, if $\varepsilon'\leq \varepsilon$, then
$$\|(p-g)^{1/2}b''(p-g)^{1/2}\|\geq\|(p-g)b''(p-g)\|\geq 1-\varepsilon.$$
Hence by the second inequality again,
$$1-\varepsilon\leq\|(p-g)^{1/2}b''(p-g)^{1/2}\|\leq \|b'''\|+\|(p-g)^{1/2}b''(p-g)^{1/2}-b'''\|$$
$$\leq \|b'''\|+\varepsilon.$$ Therefore,
 $$\|(b'''-\varepsilon)_+\|\geq\|b'''\|-\varepsilon\geq 1-3\varepsilon.$$
 So, if $\varepsilon<1/3$, then  $\|(b'''-\varepsilon)_+\|>0.$

 Since $B\in \Omega,$  for  $H=\{a_1',~a_2',~\cdots, ~a_k',~ p-g,~ (p-g)^{1/2},~ (p-g)a_i'\}\subseteq B,$
  $\varepsilon>0$, $(b'''-\varepsilon)_+>0$, and $n$, there  is  an order zero contraction $\psi_0:{\rm M}_n\to B$  with
the following properties:

$(1)''$ $p-\psi_0(1)\precsim (b'''-\varepsilon)_+,$ and

$(2)''$ for any element $z\in {\rm M}_n$ of norm one,  and any $x\in H,$, we have
 $\|\psi_0(z)x-x\psi_0(z)\|<\varepsilon.$

 By Theorem \ref {thm:2.10}, applied with both the $A$ and $B$ of 2.10 equal to the present $B$,  there exists  a completely positive contractive order zero map $\psi:{\rm M}_n\to B$ such that

$(1)'''$ $\|\psi(z)-\psi_0(z)(p-g)\|<\varepsilon$, and

$(2)'''$  $p-\psi(1)\precsim (p-(p-g))\oplus (p-\psi_0(1))$.

We then have
$$\begin{array}{ll}
&1-\psi(1)=1-p+p-\psi(1)\precsim (1+g-p)\oplus(p-\psi(1))\\
&\precsim {b_1}'\oplus (p-(p-g))\oplus(p-\psi_0(1)) ~~( {\rm by} ~~(3)'  {\rm and}~~ (2)''')\\
 &\precsim {b_1}'\oplus(1-(p-g))\oplus (b'''-\varepsilon)_+ ~~( {\rm by} ~~(1)'')\\
 &\precsim {b_1}'\oplus {b_2}'\oplus (p-g)^{1/2}b''(p-g)^{1/2} ~({\rm by } ~~(3)' {\rm and} ~~~(7)')\\
  &\precsim b'+b''\precsim b.
\end{array}$$
This is $(1)$ above.

For any element $z\in {\rm M}_n$ of norm one, any $a_i'\in F$,
we have (by $(1)'''$, $(2)''$, the choice of $a_i'$, $(2)'$ for small enough $\varepsilon'$, the choice of  $a_i'$, and  $(1)'''$)

$$\begin{array}{ll}
&\|\psi(z)a_i'-a_i'\psi(z)\|\\
&\leq\|\psi(z)a_i'-\psi_0(z)(p-g)a_i'\|
+\|\psi_0(z)(p-g)a_i'-(p-g)a_i'\psi_0(z)\|\\
 &+\|(p-g)a_i'\psi_0(z)-(p-g)(p-g)^{1/2}a_i(p-g)^{1/2}\psi_0(z)\|\\
 &+\|(p-g)(p-g)^{1/2}a_i(p-g)^{1/2}\psi_0(z)-(p-g)^{1/2}a_i(p-g)^{1/2}(p-g)\psi_0(z)\|\\
 &+\|(p-g)^{1/2}a_i(p-g)^{1/2}(p-g)\psi_0(z)-a_i'(p-g)\psi_0(z)\|\\
 &+\|a_i'(p-g)\psi_0(z)-a_i'\psi(z)\|<\varepsilon+\varepsilon+\varepsilon+\varepsilon+
 \varepsilon+\varepsilon=6\varepsilon \hspace{0.8cm}(\textbf{3.4.2}).
\end{array}$$

We also have (by $(1)'''$, $(6)'$,  $(2)''$, $(2)'$ with $\varepsilon'\leq \varepsilon$, $(5)'$,  the choice of $a_i'$, $(2)''$, the choice of $a_i'$,
 $(5)'$, $(1)'''$, and  $(6)'$)
$$\begin{array}{ll}
&\|\psi(z)(1-(p-g))^{1/2}a_i(1-(p-g))^{1/2}-(1-(p-g))^{1/2}a_i(1-(p-g))^{1/2}\psi(z)\|\\
&\leq\|\psi(z)(1-(p-g))^{1/2}a_i((1-(p-g))^{1/2}\\
 &-\psi_0(z)(p-g)(1-(p-g))^{1/2}a_i(1-(p-g))^{1/2}\|\\
 &+\|\psi_0(z)(p-g)(1-(p-g))^{1/2}a_i(1-(p-g))^{1/2}-\psi_0(z)(p-g)(1-(p-g))a_i\|\\
 &+\|\psi_0(z)(1-(p-g))(p-g)a_i-(1-(p-g))\psi_0(z)(p-g)a_i\|\\
 &+\|(1-(p-g))\psi_0(z)(p-g)a_i-(1-(p-g))\psi_0(z)a_i(p-g)\|\\
 &+\|(1-(p-g))\psi_0(z)a_i(p-g)-(1-(p-g))\psi_0(z)(p-g)^{1/2}a_i(p-g)^{1/2}\|\\
 &+\|(1-(p-g))\psi_0(z)(p-g)^{1/2}a_i(p-g)^{1/2}-(1-(p-g))\psi_0(z)a_i'\|\\
 &+\|(1-(p-g))\psi_0(z)a_i'-(1-(p-g))a_i'\psi_0(z)\|\\
 &+\|(1-(p-g))a_i'\psi_0(z)-(1-(p-g))(p-g)^{1/2}a_i(p-g)^{1/2}\psi_0(z)\|\\
 &+\|(1-(p-g))(p-g)^{1/2}a_i(p-g)^{1/2}\psi_0(z)-(1-(p-g))a_i(p-g)\psi_0(z)\|\\
 &+\|(1-(p-g))a_i(p-g)\psi_0(z)-(1-(p-g))a_i\psi(z)\|\\
 &+\|(1-(p-g))a_i\psi(z)-(1-(p-g))^{1/2}a_i(1-(p-g))^{1/2}\psi(z)\|\\
 &<\varepsilon+\varepsilon+\varepsilon+\varepsilon+\varepsilon+\varepsilon+\varepsilon+\varepsilon+\varepsilon+
 \varepsilon+\varepsilon=11\varepsilon  \hspace{0.8cm}(\textbf{3.4.3}).
\end{array}$$
Therefore, for any $a_i\in F$,  we have (by $(\textbf{3.4.1})$,  $(\textbf{3.4.1})$, $(\textbf{3.4.2})$, and $(\textbf{3.4.3})$)
$$\begin{array}{ll}
&\|\psi(z)a_i-a_i\psi(z)\|\\
&\leq\|\psi(z)a_i-\psi(z)(a_i'+(1-(p-g))^{1/2}a_i(1-(p-g))^{1/2})\|\\
 &+\|\psi(z)(a_i'+(1-(p-g))^{1/2}a_i(1-(p-g))^{1/2})\\
 &-(a_i'+(1-(p-g))^{1/2}a_i(1-(p-g))^{1/2})\psi(z)\|\\
 &+\|(a_i'+(1-(p-g))^{1/2}a_i(1-(p-g))^{1/2})\psi(z)-a_i\psi(z)\|\\
 &\leq 3\varepsilon+3\varepsilon+\|\psi(z)a_i'-a_i'\psi(z)\|\\
 &+ \|\psi(z)((1-(p-g))^{1/2}a_i((1-(p-g))^{1/2}\\
 &-((1-(p-g))^{1/2}a_i((1-(p-g))^{1/2}\psi(z)\|\\
 &\leq 6\varepsilon+6\varepsilon+11\varepsilon=23\varepsilon.
\end{array}$$
This is $(2)$ above, with $23\varepsilon$ in place of  $\varepsilon$.
\end{proof}

The following  corollary  was obtained by Elliott, Fan, and Fang in \cite{EFF}.

\begin{corollary}\label{cor:3.5}(\cite{EFF}.)
 Let $\Omega$ be a class of  unital
${\rm C^*}$-algebras which are tracially $\mathcal{Z}$-absorbing. Then $A$ is tracially $\mathcal{Z}$-absorbing, or else $A=\mathbb{C}$,  for  any  simple   unital ${\rm C^*}$-algebra $A\in {\rm TA}\Omega$.
\end{corollary}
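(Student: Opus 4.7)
The plan is to derive this corollary as a one-line consequence of Theorem \ref{thm:3.4}, in exact parallel with the proof of Corollary \ref{cor:3.2}. Concretely, I would invoke the inclusion $\mathrm{TA}\Omega \subseteq \mathrm{WTA}\Omega$ (stated in the remark following Definition \ref{def:2.5}, and pointed out again in Section 1) and then quote Theorem \ref{thm:3.4} verbatim: since every $B \in \Omega$ is tracially $\mathcal{Z}$-absorbing and $A \in \mathrm{WTA}\Omega$ is simple and unital, either $A = \mathbb{C}$ or $A$ is tracially $\mathcal{Z}$-absorbing.

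The only substantive point worth recording in the write-up is a sentence reminding the reader why $\mathrm{TA}\Omega \subseteq \mathrm{WTA}\Omega$. Given an $\mathrm{TA}\Omega$ approximation $(p, B)$ for a finite set $F$ and positive element $a$, one takes $g = 0$, so that $p - g = p$. Conditions (1) and (2) of Definition \ref{def:1.1} follow immediately from conditions (1) and (2) of Definition \ref{def:2.5}, and condition (3) is unchanged. The only condition to be checked is (4), namely $\|p a p\| \geq \|a\| - \varepsilon$. In a simple unital C*-algebra this is arranged by the standard perturbation trick used in the proof of Theorem 4.1 of \cite{EZ}: one first replaces $a$ by an element concentrated near its norm (e.g.\ $f_{\|a\| - \varepsilon/2}(a)$) and enlarges $F$ to contain $a$; the almost-commutation of $p$ with $a$ then forces $\|pap\|$ to approximate $\|a\|$, after a routine tightening of tolerances.

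There is really no obstacle: once the inclusion $\mathrm{TA}\Omega \subseteq \mathrm{WTA}\Omega$ is granted, the corollary is immediate from Theorem \ref{thm:3.4}. Accordingly, I would write the proof as two sentences, in the same format as Corollary \ref{cor:3.2}: \emph{As pointed out in Section 1, $\mathrm{TA}\Omega \subseteq \mathrm{WTA}\Omega$. The statement then follows from Theorem \ref{thm:3.4}.}
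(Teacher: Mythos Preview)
Your proposal is correct and matches the paper's proof essentially verbatim: the paper's argument is exactly the two sentences you suggest, invoking $\mathrm{TA}\Omega \subseteq \mathrm{WTA}\Omega$ and then Theorem~\ref{thm:3.4}.
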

\begin{proof}
  As pointed out in Section $1$, $\rm {TA}\Omega\subseteq \rm {WTA}\Omega$. The
 statement then follows from  Theorem \ref{thm:3.4}.
\end{proof}

The following  corollary  was obtained by   Archey, Buck, and  Phillips in \cite{AJN}.

\begin{corollary}\label{cor:3.6}(\cite{AJN}.)
 Let $A$ be a simple  unital ${\rm C^*}$-algebra, and let $B$ be a centrally large
 subalgebra of $A$. If $B$ is tracially $\mathcal{Z}$-absorbing, then $A$ is tracially $\mathcal{Z}$-absorbing.
 \end{corollary}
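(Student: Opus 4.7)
The plan is to deduce this directly from Theorem \ref{thm:3.4}, using the observation (recorded in the remark following Definition \ref{def:2.6}) that the presence of a centrally large subalgebra is a special case of the weak tracial approximation property. Specifically, I would set $\Omega = \{B\}$; by hypothesis every algebra in $\Omega$ is tracially $\mathcal{Z}$-absorbing, and by the remark following Definition \ref{def:2.6} we have $A \in \mathrm{WTA}\Omega$. Theorem \ref{thm:3.4} then yields that $A$ itself is tracially $\mathcal{Z}$-absorbing (the alternative $A = \mathbb{C}$ is ruled out because $A$ contains the infinite-dimensional subalgebra $B$, since $B$ being tracially $\mathcal{Z}$-absorbing forces $B \neq \mathbb{C}$ and hence $\dim B = \infty$ by the remark following Definition \ref{def:2.3}).

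There is essentially no obstacle: all the real work has already been done in Theorem \ref{thm:3.4}, whose proof handles the passage from a weak tracial approximation by tracially $\mathcal{Z}$-absorbing subalgebras to tracial $\mathcal{Z}$-absorption of the ambient algebra via Theorem \ref{thm:2.10} (Archey--Buck--Phillips's correction lemma for order zero maps). The only thing one might wish to verify carefully is the claim in the remark that a centrally large subalgebra $B \subseteq A$ gives $A \in \mathrm{WTA}\{B\}$; this is a direct comparison of Definition \ref{def:1.1} with Definition \ref{def:2.6}, where the element $g$ of Definition \ref{def:1.1} is the same $g$ supplied by centrally largeness, the projection $p$ is $1_A = 1_B$, and conditions (1)--(4) of Definition \ref{def:1.1} correspond respectively to (3), (6), (4), and (5) of Definition \ref{def:2.6} (with condition (2) of Definition \ref{def:2.6} absorbed into the approximation in (1) of Definition \ref{def:1.1}).

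Thus the proof reduces to a one-line citation. I would simply write: by the remark following Definition \ref{def:2.6}, $A \in \mathrm{WTA}\Omega$ where $\Omega = \{B\}$, and since $B$ is tracially $\mathcal{Z}$-absorbing the conclusion follows from Theorem \ref{thm:3.4}.
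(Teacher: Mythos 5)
Your proposal is correct and coincides with the paper's own argument: the paper's proof of Corollary \ref{cor:3.6} is precisely the one-line citation you describe, namely that $B$ centrally large gives $A\in{\rm WTA}\Omega$ with $\Omega=\{B\}$ (the remark following Definition \ref{def:2.6}), after which Theorem \ref{thm:3.4} applies. Your extra verification of that remark (matching conditions $(1)$--$(4)$ of Definition \ref{def:1.1} with $(3)$, $(6)$, $(4)$, $(5)$ of Definition \ref{def:2.6}, with $p=1$) is sound and in fact more detail than the paper supplies.
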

\begin{proof}
  See remark following Definition \ref{def:2.6}.
\end{proof}
\begin{theorem}\label{thm:3.7}Let $\Omega$ be a class of unital nuclear ${\rm C^*}$-algebras with tracial nuclear dimension at most $n$ (Definition \ref {def:2.4}).   Then  $A$  has tracial nuclear dimension at most $n$ for  any  simple  unital ${\rm C^*}$-algebra $A\in{\rm  WTA}\Omega.$\end{theorem}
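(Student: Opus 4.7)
The plan is to mimic the strategy of Theorem \ref{thm:3.4} (tracial $\mathcal{Z}$-absorption) in the tracial nuclear dimension setting: use the WTA$\Omega$ hypothesis to produce, for given $\varepsilon$ and finite $F$, an element $h = p - g$ in a subalgebra $B \in \Omega$ that partitions $A$ into a large part close to $B$ and a small part dominated by $a$ in the Cuntz sense; then exploit the tracial nuclear dimension of $B$ to obtain the required structure in $B$; and finally transfer this back to $A$ using the hypothesized nuclearity.

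Given $\varepsilon > 0$, a finite $F \subseteq A$ of contractions, and a non-zero positive $a \in A$, I would first split $a$ into orthogonal pieces $a' + a'' \precsim a$ with $a' \sim a''$ via Lemma 2.3 of \cite{P3}, then apply WTA$\Omega$ to the finite set $F \cup \{a', (a')^{1/2}\}$ with small parameter $\varepsilon'$ and element $a'$, obtaining a projection $p \in A$, $g \in B$ with $0 \leq g \leq p$, and $B \in \Omega$ with $1_B = p$. Setting $h := p - g \in B$, condition (2) of WTA combined with Lemma 2.5.11 of \cite{L2} gives that each $x \in F$ approximately commutes with $h^{1/2}$ and $(1-h)^{1/2}$, yielding the decomposition $x \approx h^{1/2} x h^{1/2} + (1-h)^{1/2} x (1-h)^{1/2}$. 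Using (1) of WTA and Theorem \ref{thm:2.1}(1), one finds $\hat{x}_i \in B$ close to $h^{1/2} x_i h^{1/2}$, and $\hat{a}' \in B_+$ close to $h^{1/2} a' h^{1/2}$; condition (4) of WTA (applied with $a = a'$) ensures $(\hat{a}' - \varepsilon')_+ \in B_+ \setminus \{0\}$ and $(\hat{a}' - \varepsilon')_+ \precsim a'$ in $A$ by Theorem \ref{thm:2.1}(1).

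Next, I would apply the tracial nuclear dimension $\leq n$ property of $B$ to the finite set $\hat{F} := \{\hat{x}_i\} \cup \{h, h^{1/2}\}$, small $\varepsilon''$, and non-zero positive $(\hat{a}' - \varepsilon')_+$: this produces a C*-subalgebra $D \subseteq B$ with $\dim_{\mathrm{nuc}}(D) \leq n$ and CPC maps $\varphi_B \colon B \to B$, $\psi_B \colon B \to D$ with $\varphi_B(p) \precsim (\hat{a}' - \varepsilon')_+ \precsim a'$ and $\|y - \varphi_B(y) - \psi_B(y)\| < \varepsilon''$ for $y \in \hat{F}$. Since $B$ and $D$ are nuclear (by hypothesis of Theorem \ref{thm:3.7}, and since $\dim_{\mathrm{nuc}}(D) \leq n$), combining the CPAPs of $B$ and $D$ with Arveson's extension theorem at the finite-dimensional intermediate stage yields CPC extensions $\tilde\varphi \colon A \to B$ and $\tilde\psi \colon A \to D$ approximating $\varphi_B, \psi_B$ on $\hat{F}$. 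Set
\[
\psi(x) := \tilde\psi(h^{1/2} x h^{1/2}), \qquad \varphi(x) := (1-h)^{1/2} x (1-h)^{1/2} + \tilde\varphi(h^{1/2} x h^{1/2}).
\]
Tracking the estimates then gives $\|x - \varphi(x) - \psi(x)\| < \varepsilon$ for $x \in F$, while the Cuntz bound
$\langle \varphi(1) \rangle \leq \langle 1-h \rangle + \langle \varphi_B(p) \rangle \leq \langle a' \rangle + \langle a' \rangle = \langle a' + a'' \rangle \leq \langle a \rangle$
(using $a' \sim a''$ and $a' a'' = 0$) delivers $\varphi(1) \precsim a$.

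The main obstacle is that $\varphi$ as defined need not be \emph{contractive}: the natural formula gives $\varphi(1) \approx (1-h) + \varphi_B(h)$, but since $(1-h) = (1-p) + g$ has a $g$-component in $pAp$ which need not be operator-orthogonal to $\varphi_B(h) \in B \subseteq pAp$, the operator norm $\|\varphi(1)\|$ can in principle exceed $1$. To resolve this, one either sandwiches the $\tilde\varphi$-summand with $h^{1/2}$ (so that $\varphi(1) \leq (1-h) + h = 1$, at the cost of an extra error that must then be absorbed into the approximation $\varphi + \psi \approx \mathrm{id}$), or adapts Theorem \ref{thm:2.10} of \cite{AJN} from the order zero setting to the CPC setting, extending $\psi_B$ directly to a CPC map on $A$ with the required operator bounds. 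This is the technical heart of the argument, analogous to the careful invocation of Theorem \ref{thm:2.10} in the proof of Theorem \ref{thm:3.4}.
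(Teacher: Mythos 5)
Your proposal follows essentially the same route as the paper's proof: the same orthogonal splitting of $a$ via Lemma 2.3 of \cite{P3}, the same use of the ${\rm WTA}\Omega$ data to decompose $x\approx h^{1/2}xh^{1/2}+(1-h)^{1/2}x(1-h)^{1/2}$ with $h=p-g$, the same invocation of ${\rm Trdim_{nuc}}(B)\leq n$ inside $B$, the same use of nuclearity of $B$ to pass from $B$ to $A$ (the paper uses an approximate conditional expectation $\psi''\colon A\to B$ from Theorem 2.3.13 of \cite{L2} rather than Arveson extensions, but this is the same idea), and the same formulas for $\varphi$ and $\psi$. The contractivity obstacle you correctly single out is resolved in the paper more simply than by either of your two suggestions: one checks $\|\bar\varphi(1)\|\leq 1+2\varepsilon$ using the approximation $\|(p-g)-\varphi'(p-g)-\psi'(p-g)\|<\varepsilon$ together with $0\leq\psi'(p-g)\leq 1$, and then normalizes by setting $\varphi=\frac{1}{1+2\varepsilon}\bar\varphi$, absorbing the extra $\frac{4\varepsilon}{1+2\varepsilon}$ into the final estimate.
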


\begin{proof}
Let $A$ be a simple unital ${\rm C^*}$-algebra in $\rm{WTA}\Omega$.  We must show that for any
$\varepsilon>0$, any finite subset $F=\{a_1,a_2, $ $\cdots, a_n\}$ of $A$,  and any
non-zero positive element $b$ of $A$,  there exist  a ${\rm C^*}$-subalgebra  $D$ of $A$ with ${\rm dim_{nuc}}(D)\leq m$, a contractive completely positive linear map $\varphi:A\to A$, and a contractive completely positive linear map $\psi: A\to D$ such that

$(1)$ $\varphi(1)\precsim b$, and

$(2)$ $\|x-\varphi(x)-\psi(x)\|<\varepsilon$, for any $x\in F$.

We shall show this with $8\varepsilon$ in place of $\varepsilon$.
By Lemma 2.3 of \cite{P3},   there exist  positive elements $b_1,~b_2\in A$ of norm one such that
$b_1b_2=0$,  $b_1\sim b_2$, and $b_1+b_2\precsim b$.

Given $\varepsilon>0$, with $f(t)=t^{1/2}\in C([0,1])$, there exists $\varepsilon'>0$ satisfying  Lemma 2.5.11 (1) of \cite{L2}. Given such $\varepsilon'>0$, for $G=F\cup\{b_2\}$,
  since   $A\in{\rm  WTA}\Omega$  there
exist a  projection $p\in A$,   an element $g\in A$ with $0\leq g\leq 1$,   and  a $\rm C^*$-subalgebra $B$ of $A$ with $g\in B$,
$1_B=p$,  and  ${\rm Trdim}_{nuc}(B)\leq m$ such that

$(1)'$ $(p-g)x\in_{\varepsilon'} B,~ x(p-g)\in_{\varepsilon'} B$,  for $x\in G$,

$(2)'$ $\|(p-g)x-x(p-g)\|<\varepsilon'$,  for $x\in G$,

$(3)'$ $1-(p-g)\precsim b_1 \sim{b_2}$, and

$(4)'$ $\|(p-g)b_2(p-g)\|>1-\varepsilon'$.

By  $(2)'$ and Lemma 2.5.11  (1) of \cite{L2}, if $\varepsilon'$ is sufficiently small, we have

$(5)'$ $\|(p-g)^{1/2}x-x(p-g)^{1/2}\|<\varepsilon,$ for any $x\in G$,

$(6)'$ $\|(1-(p-g))^{1/2}x-x(1-(p-g))^{1/2}\|<\varepsilon,$
for any $x\in G$.

By $(1)'$, with sufficiently small $\varepsilon'$, together with $(5)'$, there exist elements $a_1',~ a_2',~ \cdots, ~ a_n'\in B$ and a positive element $b_2'\in B$ such that  $$\|(p-g)^{1/2}a_i(p-g)^{1/2}-a_i'\|<\varepsilon,  ~~~ \rm{for} ~1\leq i\leq n,~~ \rm {and} $$  $$\|(p-g)^{1/2}b_2(p-g)^{1/2}-b_2'\|<\varepsilon.$$

From the first inequality, together with $(5)'$ and $(6)'$, for any $1\leq i\leq n$,  one has
$$\begin{array}{ll}
&\|a_i-a_i'-(1-(p-g))^{1/2}a_i(1-(p-g))^{1/2}\|\\
&\leq\|a_i-(p-g)a_i-(1-(p-g))a_i\|+\|(p-g)a_i-(p-g)^{1/2}a_i(p-g)^{1/2}\|\\
 &+\|(1-(p-g))a_i-(1-(p-g))^{1/2}a_i(1-(p-g))^{1/2}\|\\
 &+\|(p-g)^{1/2}a_i(p-g)^{1/2}-a_i'\|\\
  &<\varepsilon+\varepsilon+\varepsilon=3\varepsilon \hspace{0.8cm}(\textbf{3.7.1}).
\end{array}$$

Since  $\|(p-g)^{1/2}b_2(p-g)^{1/2}-b_2'\|<\varepsilon $, by $(1)$ of Theorem \ref{thm:2.1}, we have

$(7)'$ $(b_2'-3\varepsilon)_+\precsim ((p-g)^{1/2}b_2(p-g)^{1/2}-2\varepsilon)_+ .$

By  $(4)'$, with $\varepsilon'\leq \varepsilon<1/5$, $$\|(p-g)^{1/2}b_2(p-g)^{1/2}\|\geq\|(p-g)b_2(p-g)\|\geq 1-\varepsilon.$$

Therefore,  by the choice of $b_2'$, one has  $$\|(b_2'-3\varepsilon)_+\|\geq \|(p-g)^{1/2}b_2(p-g)^{1/2}\|-4\varepsilon\geq 1-5\varepsilon.$$  In particular,  $(b_2'-3\varepsilon)_+\neq 0$.

Define  a contractive completely positive linear map $\varphi'': A\to A$ by $\varphi''(a)=(1-(p-g))^{1/2}a(1-(p-g))^{1/2}$.
Since $B$ is a nuclear ${\rm C^*}$-algebra, by Theorem 2.3.13 of  \cite{L2} there exists a contractive completely positive linear map
  $\psi'': A\to B$ such that $\|\psi''(p-g)-(p-g)\|<\varepsilon$ and $\|\psi''(a_i')-a_i'\|<\varepsilon$ for all $1\leq i\leq n$.

Since ${\rm Trdim_{nuc}}(B)\leq m$,  there exist  a contractive completely positive linear map $\varphi':B\to B$  and a contractive completely positive linear map $\psi': B\to D$
 with ${\rm dim_{nuc}}(D)\leq m$
 such that

$(1)''$ $\varphi'(p)\precsim (b_2'-3\varepsilon)_+$, and

$(2)''$ $\|(p-g)-\varphi'(p-g)-\psi'(p-g)\|<\varepsilon$, and $\|a_i'-\varphi'(a_i')-\psi'(a_i')\|<\varepsilon$, for all $1\leq i\leq n$.

Define $\bar{\varphi}:A\to A$ by $\bar{\varphi}(a)=\varphi''(a)+\varphi'(\psi''((p-g)^{1/2}a(p-g)^{1/2}))$, $\varphi=\frac{1}{1+2\varepsilon}\bar{\varphi}$,
and  $\psi: A\to D$ by
 $\psi(a)=\psi'(\psi''((p-g)^{1/2}a(p-g)^{1/2})))$.
 Then
 $$\begin{array}{ll}
&\|\varphi(1)\|=\frac{1}{1+2\varepsilon}\|\bar{\varphi}(1)\|\\
&=\frac{1}{1+2\varepsilon}\|\varphi''(1)+\varphi'(\psi''((p-g)^{1/2}1(p-g)^{1/2}))\|\\
 &=\frac{1}{1+2\varepsilon}\|1-(p-g)-\varphi'(\psi''(p-g))\|\\
 &=\frac{1}{1+2\varepsilon}\|1-(p-g)+\varphi'(p-g)+\psi'(p-g)+\varphi'(\psi''(p-g))
 -\varphi'(p-g)-\psi'(p-g)\|\\
 &\leq \frac{1}{1+2\varepsilon}(\|(p-g)-\varphi'(p-g)-\psi'(p-g)\|\\
 &+ \|\varphi'(\psi''(p-g))-\varphi'(p-g)\|+\|1-\psi'(p-g)\|)\\
 &\leq
 \frac{1}{1+2\varepsilon}(1+2\varepsilon)=1 ~~ (\rm {by} (2)''~~ \rm {and~~ definition~~ of} ~~~\psi'') .
\end{array}$$
 Therefore, $\varphi$ is a contractive completely positive linear map. Also $\psi$ is a contractive completely positive linear map.

 We have
$$\begin{array}{ll}
&\varphi(1)=\frac{1}{1+2\varepsilon}(\varphi''(1)+\varphi'(\psi''(p-g)))\\
&\sim \varphi''(1)+\varphi'(\psi''(p-g))\\
&\precsim 1-(p-g)\oplus \varphi'(p)\precsim b_1\oplus (b_2'-3\varepsilon)_+ ~~~({\rm by} ~~(3)' {\rm and} ~~(1)'')\\
 &\precsim b_1\oplus ((p-g)^{1/2}b_2(p-g)^{1/2}-2\varepsilon)_+ ~~~(\rm {by} ~~(7)')\\
 &\precsim b_1\oplus b_2\sim b_1+b_2\precsim b,
\end{array}$$
 and (by $(\textbf{3.7.1})$, $(2)''$, the choice of $\psi''$, the definition of $a_i'$, and the last two again)
 $$\begin{array}{ll}
&\|a_i-\varphi(a_i)-\psi(a_i)\|\\
&=\|a_i-\frac{1}{1+2\varepsilon}\varphi''(a_i)-\frac{1}{1+2\varepsilon}\varphi'(\psi''((p-g)^{1/2}a_i(p-g)^{1/2}))\\
&-\psi'(\psi''((p-g)^{1/2}a_i(p-g)^{1/2}))\|\\
 &\leq  \|a_i-\frac{1}{1+2\varepsilon}(1-(p-g))^{1/2}a_i(1-(p-g))^{1/2}-a_i'\|\\
 &+\|a_i'-\frac{1}{1+2\varepsilon}\varphi'(\psi''((p-g)^{1/2}a_i(p-g)^{1/2}))-\psi'(\psi''((p-g)^{1/2}a_i(p-g)^{1/2}))\|\\
 &\leq\|a_i-(1-(p-g))^{1/2}a_i(1-(p-g))^{1/2}-a_i'\|\\
 &+\|(1-(p-g))^{1/2}a_i(1-(p-g))^{1/2}-\frac{1}{1+2\varepsilon}(1-(p-g))^{1/2}a_i(1-(p-g))^{1/2}\|\\
 &+\|a_i'-\varphi'(a_i')-\psi'(a_i')\|\\
 &+\|\varphi'(a_i')-\varphi'(\psi''(a_i'))\|\\
 &+\|\varphi'(\psi''(a_i'))-\varphi'(\psi''((p-g)^{1/2}a_i(p-g)^{1/2}))\|\\
 &+\|\varphi'(\psi''((p-g)^{1/2}a_i(p-g)^{1/2}))-\frac{1}{1+2\varepsilon}\varphi'(\psi''((p-g)^{1/2}a_i(p-g)^{1/2}))\|\\
 &+\|\psi'(a_i')-\psi'(\psi''(a_i'))\|\\
 &+\|\psi'(\psi''(a_i'))-\psi'(\psi''((p-g)^{1/2}a_i(p-g)^{1/2}))\|\\
  &\leq 3\varepsilon+\frac{2\varepsilon}{1+2\varepsilon}+\varepsilon+\varepsilon+\varepsilon+\frac{2\varepsilon}{1+2\varepsilon}+\varepsilon+\varepsilon
  =8\varepsilon+\frac{4\varepsilon}{1+2\varepsilon}.
\end{array}$$

Thus we have $(1)$, and $(2)$ with $8\varepsilon+\frac{4\varepsilon}{1+2\varepsilon}$ in place of $\varepsilon$.
\end{proof}

The following  corollary  was obtained by  Fan and Yang in \cite{Q9}.

\begin{corollary}\label{cor:3.8}
 Let $\Omega$ be a class of  unital
${\rm C^*}$-algebras such that ${\rm Trdim_{nuc}}(B)$ $\leq m$ for any $B\in \Omega$. Then ${\rm Trdim_{nuc}}(A)$ $\leq m$  for  any  simple unital ${\rm C^*}$-algebra $A\in {\rm TA}\Omega$.
\end{corollary}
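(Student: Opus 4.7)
The plan is to derive Corollary \ref{cor:3.8} as a direct formal consequence of Theorem \ref{thm:3.7}, exactly mirroring the derivations of Corollaries \ref{cor:3.2} and \ref{cor:3.5} from Theorems \ref{thm:3.1} and \ref{thm:3.4}. The only ingredient beyond Theorem \ref{thm:3.7} that is needed is the inclusion $\mathrm{TA}\Omega \subseteq \mathrm{WTA}\Omega$, which was already recorded in the remark following Definition \ref{def:2.5} and is justified by the proof of Theorem 4.1 of \cite{EZ}.

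To spell out the inclusion briefly: given a simple unital $A \in \mathrm{TA}\Omega$, a finite set $F \subseteq A$, $\varepsilon > 0$, and a non-zero positive element $a \in A$, Definition \ref{def:2.5} supplies a projection $p \in A$ and a $\mathrm{C}^*$-subalgebra $B \subseteq A$ with $1_B = p$, $B \in \Omega$, satisfying $\|xp - px\| < \varepsilon$, $pxp \in_{\varepsilon} B$ for $x \in F$, and $1 - p \precsim a$. Taking $g = 0 \in B$, so that $p - g = p$, yields clauses (1)--(3) of Definition \ref{def:1.1} at once; for clause (4), one applies the standard preparatory step from the proof of Theorem 4.1 of \cite{EZ}, using simplicity of $A$ together with Lemma 2.3 of \cite{P3} to pre-replace $a$ by orthogonal subequivalent positive elements so that the approximate centrality of $p$ forces $\|pap\| \geq \|a\| - \varepsilon$. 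This exhibits $A$ as a member of $\mathrm{WTA}\Omega$.

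Once the inclusion is in hand, there is no further obstacle: since $A$ is simple, unital, and lies in $\mathrm{WTA}\Omega$, Theorem \ref{thm:3.7} applied directly to $A$ gives $\mathrm{Trdim_{nuc}}(A) \leq m$. The brevity of the argument is a deliberate feature of the paper's organization: the main theorems are proved for the larger class $\mathrm{WTA}\Omega$ precisely so that the classical tracial-approximation corollaries, as well as the centrally-large-subalgebra corollaries, become one-line deductions.
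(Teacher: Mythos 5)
Your proposal is correct and coincides with the paper's own proof, which simply invokes the inclusion $\mathrm{TA}\Omega \subseteq \mathrm{WTA}\Omega$ (recorded in the remark after Definition \ref{def:2.5}) and then applies Theorem \ref{thm:3.7}; your spelling-out of the inclusion via $g=0$ and the preparatory step from Theorem 4.1 of \cite{EZ} is a faithful elaboration of what the paper leaves implicit. The only caveat, inherited from the paper itself, is that Theorem \ref{thm:3.7} as stated assumes the algebras in $\Omega$ are nuclear, a hypothesis not repeated in the corollary.
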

\begin{proof}
 As pointed out in Section $1$, $\rm {TA}\Omega\subseteq \rm {WTA}\Omega$. The
 statement then follows from  Theorem \ref{thm:3.7}.
\end{proof}

The following  corollary  was obtained by  Zhao, Fang,  and Fan in \cite{ZF}.
 \begin{corollary}\label{cor:3.9}
 Let $A$ be a simple  unital ${\rm C^*}$-algebra, and let $B$  be a nuclear centrally large
 subalgebra of $A$. If ${\rm Trdim_{nuc}}(B)$ $\leq m$, then ${\rm Trdim_{nuc}}(A)$ $\leq m$.
 \end{corollary}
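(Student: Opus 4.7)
The plan is to reduce this corollary directly to Theorem \ref{thm:3.7} by choosing the singleton class $\Omega = \{B\}$. First I would verify that the hypotheses of Theorem \ref{thm:3.7} are met for this choice: by assumption $B$ is unital and nuclear, and ${\rm Trdim_{nuc}}(B) \leq m$, so $\Omega$ is indeed a class of unital nuclear ${\rm C^*}$-algebras of tracial nuclear dimension at most $m$.

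Next I would invoke the remark following Definition \ref{def:2.6}, which asserts that whenever $B \subseteq A$ is a centrally large subalgebra of a simple unital ${\rm C^*}$-algebra $A$, one has $A \in {\rm WTA}\{B\}$. This is the crucial bridge: centrally large subalgebras are the prototypical example of the weak tracial approximation relation introduced in Definition \ref{def:1.1}, the point being that the cutting element $p-g$ obtained from the centrally large property serves precisely as the ``approximate unit of the approximating subalgebra'' required in the definition of ${\rm WTA}\Omega$. I would not reprove this remark, since it is already noted in Section~$2$ of the excerpt.

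With these two facts in hand the corollary is immediate: applying Theorem \ref{thm:3.7} to the simple unital ${\rm C^*}$-algebra $A \in {\rm WTA}\{B\}$ with $\Omega = \{B\}$ yields ${\rm Trdim_{nuc}}(A) \leq m$, which is exactly the desired conclusion. There is essentially no obstacle here beyond recognizing that the general framework of Theorem \ref{thm:3.7} was designed precisely to subsume the centrally-large-subalgebra result of Zhao, Fang, and Fan, just as Corollaries \ref{cor:3.3} and \ref{cor:3.6} were deduced from Theorems \ref{thm:3.1} and \ref{thm:3.4}.
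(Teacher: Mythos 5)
Your proposal is correct and follows exactly the paper's own route: the paper's proof of this corollary is simply the reference to the remark after Definition \ref{def:2.6}, which gives $A \in {\rm WTA}\{B\}$, combined with Theorem \ref{thm:3.7} applied to $\Omega = \{B\}$.
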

\begin{proof}
  See remark following Definition \ref{def:2.6}.
\end{proof}

\begin{theorem}\label{thm:3.10}
 Let $\Omega$ be a class of   unital
${\rm C^*}$-algebras  which are  $m$-almost divisible (Definition \ref {def:2.7}). Let  $A\in{\rm WTA}\Omega$ be a  simple unital stably finite ${\rm C^*}$-algebra such that for any $n\in \mathbb{N}$
the  ${\rm C^*}$-algebra $\rm{M}$$_n(A)$  belongs to the class ${\rm WTA}\Omega$. Then
   $A$  is weakly $(2, m)$-almost divisible (Definition \ref {def:2.8}).
\end{theorem}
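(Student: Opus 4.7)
The plan is to apply the WTA hypothesis to a matrix algebra ${\rm M}_n(A)$ containing $a$, exploit the $m$-almost divisibility of the approximating subalgebra $B\in\Omega$ on a compression of $a$, and absorb the error term $1-(p-g)$ by pairing with an auxiliary summand of $b$. The key trick is to choose the non-zero positive element appearing in Definition \ref{def:1.1} to be the first of $k$ pairwise orthogonal, mutually Cuntz-equivalent pieces of $a$ furnished by Lemma 2.3 of \cite{P3}; this is what will produce the factor $2$ in $k\langle b\rangle\le 2\langle a\rangle$.

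Given $a\in {\rm M}_n(A)_+$ with $\|a\|\le 1$, $k\in\mathbb{N}$, and $\varepsilon>0$, I would first invoke Lemma 2.3 of \cite{P3} inside the simple infinite-dimensional ${\rm C^*}$-algebra ${\rm M}_n(A)$ to produce pairwise orthogonal, mutually Cuntz-equivalent $a_1,\ldots,a_k\in {\rm M}_n(A)_+$ with $a_1+\cdots+a_k\precsim a$, so in particular $k\langle a_1\rangle\le\langle a\rangle$. Next I would pick auxiliary parameters $\varepsilon'<\delta$ and $\eta>0$ with $\varepsilon'+\delta+\eta\le\varepsilon/2$, and apply Definition \ref{def:1.1} to ${\rm M}_n(A)\in{\rm WTA}\Omega$ with finite set $\{a\}$, tolerance $\varepsilon'$, and non-zero element $a_1$. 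This yields a projection $p\in {\rm M}_n(A)$, $g\in B$ with $0\le g\le 1$ (so $g=pgp\le p$, i.e., $0\le p-g\le 1$), and a ${\rm C^*}$-subalgebra $B\subseteq {\rm M}_n(A)$ with $1_B=p$ and $B\in\Omega$, satisfying $1-(p-g)\precsim a_1$; and, arguing as in the proof of Theorem \ref{thm:3.4} via Lemma 2.5.11(1) of \cite{L2}, for $\varepsilon'$ small enough there exists $a'\in B$ with $\|c-a'\|<\varepsilon'$, where $c:=(p-g)^{1/2}a(p-g)^{1/2}$.

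Since $B$ is $m$-almost divisible, I would apply Definition \ref{def:2.7} inside ${\rm M}_\infty(B)$ to the element $(a'-\delta)_+$, obtaining $b_1\in {\rm M}_\infty(B)_+$ with $k\langle b_1\rangle\le\langle(a'-\delta)_+\rangle$ and $\langle(a'-\delta-\eta)_+\rangle\le(k+1)(m+1)\langle b_1\rangle$, and then set $b:=b_1\oplus a_1\in {\rm M}_\infty(A)_+$. Theorem \ref{thm:2.1}(1) combined with $\|c-a'\|<\varepsilon'<\delta$ gives $(a'-\delta)_+\precsim c\precsim a$, hence
\[
k\langle b\rangle = k\langle b_1\rangle+k\langle a_1\rangle\le\langle a\rangle+\langle a\rangle = 2\langle a\rangle.
\]
For the matching upper bound on $\langle(a-\varepsilon)_+\rangle$, I would use the exact identity $a=a^{1/2}(p-g)a^{1/2}+a^{1/2}(1-(p-g))a^{1/2}$ (which needs no near-commutativity, since $(p-g)+(1-(p-g))=1$), the standard inequality $(x+y-\varepsilon)_+\precsim(x-\varepsilon/2)_+\oplus(y-\varepsilon/2)_+$ for positive $x,y$, the Cuntz equivalence $(a^{1/2}(p-g)a^{1/2}-\varepsilon/2)_+\sim(c-\varepsilon/2)_+$ (from $x^*x\sim xx^*$ applied with $x=(p-g)^{1/2}a^{1/2}$), and the estimate $a^{1/2}(1-(p-g))a^{1/2}\precsim 1-(p-g)\precsim a_1$. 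Together these yield $(a-\varepsilon)_+\precsim(c-\varepsilon/2)_+\oplus a_1\precsim(a'-\delta-\eta)_+\oplus a_1$, so
\[
\langle(a-\varepsilon)_+\rangle\le(k+1)(m+1)\langle b_1\rangle+\langle a_1\rangle\le(k+1)(m+1)\langle b\rangle.
\]

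The main obstacle I anticipate is not any single step but the coordinated choice of the four parameters $\varepsilon,\varepsilon',\delta,\eta$ along the chain of perturbation Cuntz inequalities; in particular, the step $(c-\varepsilon/2)_+\precsim(a'-\delta-\eta)_+$ rests on the standard consequence of Theorem \ref{thm:2.1}(1) that $\|x-y\|<\eta'$ and $s>\eta'$ together imply $(x-s)_+\precsim(y-(s-\eta'))_+$, applied here with $s=\varepsilon/2$ and $\eta'=\varepsilon'$, and the constraint $\varepsilon'+\delta+\eta\le\varepsilon/2$ is arranged precisely so that all the estimates chain together. Conceptually the only genuine insight is that the ``size'' of the non-zero element supplied to Definition \ref{def:1.1} must be coupled to $\langle a\rangle/k$ rather than being left truly free, and this coupling is exactly what accounts for the extra factor of $2$ in $k\langle b\rangle\le 2\langle a\rangle$.
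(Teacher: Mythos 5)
Your argument is correct, and it reaches the conclusion by a genuinely different and substantially shorter route than the paper. The paper applies the ${\rm WTA}\Omega$ hypothesis \emph{twice} --- once to $a$ and once to the remainder $a''=(1-(p-g))^{1/2}a(1-(p-g))^{1/2}$ --- and must then split into cases according to whether $(a'-3\varepsilon)_+$ and $(a'-4\varepsilon)_+$ are Cuntz equivalent to projections, using Theorem \ref{thm:2.1}(2) and (3) to manufacture a non-zero ``gap'' element $c$ (or $d$, $e$, $b_1$) satisfying $\langle(a'-4\varepsilon)_+\rangle+\langle c\rangle\le(k+1)(m+1)\langle b_1\rangle$, so that the leftover $1-(p'-g_1)\precsim c$ from the second approximation can be absorbed into the upper bound without enlarging $b$; the factor $2$ then comes from the two subalgebra pieces $b_1,b_2$, each with $k\langle b_i\rangle\le\langle a\rangle$. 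You instead apply ${\rm WTA}\Omega$ only once, but with the auxiliary positive element coupled to $a$ via Lemma 2.3 of \cite{P3} (so that $k\langle a_1\rangle\le\langle a\rangle$ and $1-(p-g)\precsim a_1$), and you absorb the remainder by simply adjoining $a_1$ as an orthogonal summand of $b$: the trivial inequality $\langle a_1\rangle\le(k+1)(m+1)\langle a_1\rangle$ makes the upper bound come for free, and the cost $k\langle a_1\rangle\le\langle a\rangle$ is exactly the second copy of $\langle a\rangle$ that weak $(2,m)$-divisibility permits. This eliminates both the second tracial approximation and the entire case analysis, and all the Cuntz-semigroup manipulations you invoke (the splitting $(x+y-\varepsilon)_+\precsim(x-\varepsilon/2)_+\oplus(y-\varepsilon/2)_+$, the equivalence $(x^*x-t)_+\sim(xx^*-t)_+$, and the perturbation estimate from Theorem \ref{thm:2.1}(1)) are standard and are also used, implicitly or explicitly, in the paper's own proof. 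Two small points to attend to in a full write-up: (i) Lemma 2.3 of \cite{P3} requires $\mathrm{M}_n(A)$ to be infinite-dimensional; this does hold here, since $A$ contains a unital subalgebra $B\in\Omega$ and no finite-dimensional ${\rm C^*}$-algebra is $m$-almost divisible, but it should be stated; (ii) the element of $B$ near $(p-g)^{1/2}a(p-g)^{1/2}$ obtained from condition (1) of Definition \ref{def:1.1} need not be positive, so it must be replaced by a nearby positive element (as the paper does), at the cost of a harmless constant in the tolerance $\varepsilon'$.
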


\begin{proof} We  must show that there is $b\in {\rm M}_{\infty}(A)_+$  such that $k\langle b\rangle \leq  \langle a \rangle+\langle a \rangle$   and $\langle (a-\varepsilon)_+\rangle \leq (k+1)(m+1)\langle b\rangle$
 for any  given $a\in A_+,$  $\varepsilon>0$, and $k\in \mathbb{N}.$
  We may assume that $\|a\|=1.$ (We have replaced $\rm {M}$$_n(A)$ containing $a$ given initially  by $A$.)

For any $\delta_1>0$, since   $A\in{\rm WTA}\Omega$,  there
exist a  projection $p\in A$, an  element $g\in A$ with $0\leq g\leq 1$,  and a $C^*$-subalgebra $B$ of $A$ with $g\in B$,
$1_B=p$, and $B\in \Omega$ such that

$(1)$  $(p-g)a\in_{\delta_1} B$, and

$(2)$ $\|(p-g)a-a(p-g)\|<{\delta_1}$.

By  $(2)$, with sufficiently small $\delta_1$,  by Lemma 2.5.11  $(1)$ of \cite{L2}, we have

$(3)$ $\|(p-g)^{1/2}a-a(p-g)^{1/2}\|<\varepsilon/3$,  and

$(4)$ $\|(1-(p-g))^{1/2}a-a(1-(p-g))^{1/2}\|<\varepsilon/3$.

By $(1)$ and $(2)$, with sufficiently small $\delta_1$, there exists a  positive element $a'\in B$  such that

 $(5)$ $\|(p-g)^{1/2}a(p-g)^{1/2}-a'\|<{\varepsilon/3}.$

By $(3)$, $(4)$,  and $(5)$,
$$\begin{array}{ll}
&\|a-a'-(1-(p-g))^{1/2}a(1-(p-g))^{1/2}\|\\
&\leq\|a-(p-g)a-(1-(p-g))a\|+\|(p-g)a-(p-g)^{1/2}a(p-g)^{1/2}\|\\
 &+\|(1-(p-g))a-(1-(p-g))^{1/2}a(1-(p-g))^{1/2}\|\\
 &+\|(p-g)^{1/2}a(p-g)^{1/2}-a'\|\\
  &<\varepsilon/3+\varepsilon/3+\varepsilon/3=\varepsilon.  \hspace{0.8cm}(\textbf{3.10.1})
\end{array}$$

 Since  $B$ is   $m$-almost divisible, and $(a'-3{\varepsilon})_+\in B$,  there exists  $b_1\in B$ such that $k\langle b_1\rangle\leq \langle (a'-3{\varepsilon})_+\rangle$
 and $ \langle (a'-4{\varepsilon})_+\rangle\leq (k+1)(m+1)\langle b_1\rangle$.

 Since  $B$ is  $m$-almost divisible, and $(a'-2{\varepsilon})_+\in B$,  there exists  $b'\in B$ such that $k\langle b'\rangle\leq \langle (a'-2{\varepsilon})_+\rangle$
 and $ \langle (a'-3{\varepsilon})_+\rangle\leq (k+1)(m+1)\langle b'\rangle$.

Write $a''=(1-(p-g))^{1/2}a(1-(p-g))^{1/2}$.

We  divide the proof into two cases.

\textbf{Case (1)} We assume that $(a'-3{\varepsilon})_+$ is Cuntz equivalent to a projection.

\textbf{(1.1)} We assume that $(a'-4{\varepsilon})_+$ is Cuntz equivalent to a projection.

\textbf{(1.1.1)} If  $\langle (a'-4{\varepsilon})_+\rangle$, the class of a projection,  is not equal to $(k+1)(m+1)\langle b_1\rangle$, by Theorem 2.1 (2),  there exists  non-zero $c\in A_+$ such that $\langle (a'-4{\varepsilon})_+\rangle+\langle c\rangle \leq (k+1)(m+1)\langle b_1\rangle$.

For any  $\delta_2>0$,  since   $A\in{\rm  WTA}\Omega$,  there
exist a  projection $p'\in A$,  an  element $g_1\in A$ with $0\leq g_1\leq 1$,  and a $C^*$-subalgebra $D$ of $A$ with  $g_1\in D$,
$1_D=p'$, and $D\in \Omega$ such that

$(1)'$  $(p'-g_1)a''\in_{\delta_2} D$,

$(2)'$ $\|(p'-g_1)a''-a''(p'-g_1)\|<\delta_2$, and

$(3)'$ $1-(p'-g_1)\precsim  c$.

 By $(1)'$ and $(2)'$,  with sufficiently small $\delta_2$, as above, via the analogues of $(4)$, $(5)$, and $(6)$ for $a'', p'$, and $g_1$, there exists a  positive element $a'''\in D$  such that
 $$\|(p'-g_1)^{1/2}a''(p'-g_1)^{1/2}-a'''\|<{\varepsilon/3}, ~~~ \rm {and}$$
  $$\|a''-a'''-(1-(p'-g_1))^{1/2}a''(1-(p'-g_1))^{1/2}\|<{\varepsilon}.  \hspace{0.8cm}(\textbf{3.10.2})$$

  Since  $D$ is   $m$-almost divisible, and $(a'''-3{\varepsilon})_+\in D$,  there exists  $b_2\in D_+$ such that $k\langle b_2\rangle \leq \langle (a'''-3{\varepsilon})_+\rangle$
 and $ \langle (a'''-4{\varepsilon})_+\rangle\leq (k+1)(m+1)\langle b_2\rangle$.

  Since $a'\leq a'+a''$, one has $\langle (a'-{\varepsilon})_+\rangle\leq \langle (a'+a''-{\varepsilon})_+\rangle$ (by Theorem 2.1 (5)).   By $(\textbf{3.10.1})$, $\|a-a'-a''\|<{\varepsilon}$ and hence  by Theorem 2.1 (1), one also has
 $\langle (a'+a''-{\varepsilon})_+\rangle\leq  \langle a\rangle$. Therefore, $\langle (a'-2{\varepsilon})_+\rangle \leq \langle (a'-{\varepsilon})_+\rangle\leq \langle a\rangle$.

 Similarly, with $x=(1-(p'-g_1))^{1/2}a''(1-(p'-g_1))^{1/2}$, $a'''\leq a'''+x$ implies
 $\langle (a'''-{3\varepsilon})_+\rangle \leq \langle(a'''+x-{3\varepsilon})_+ \rangle$ and $(\textbf{3.10.2})$, i.e.,
  $\|a''-a'''-x\|<{\varepsilon}$, implies $\langle(a'''+x-3{\varepsilon})_+ \rangle\leq \langle a'' \rangle \leq \langle a \rangle $.

 Therefore, we have
\begin{eqnarray}
\label{Eq:eq1}
  &&k\langle b_1\oplus b_2\rangle= k\langle b_1\rangle+ k\langle b_2\rangle\nonumber\\
  &&\leq \langle (a'-3{\varepsilon})_+\rangle +\langle (a'''-3{\varepsilon})_+\rangle \nonumber\\
  &&\leq \langle (a'-2{\varepsilon})_+\rangle +\langle (a'''-3{\varepsilon})_+\rangle \nonumber\\
  &&\leq \langle a\rangle+ \langle a\rangle.\nonumber
\end{eqnarray}

 By $(\textbf{3.10.1})$ and $(\textbf{3.10.2})$,
 $$\|a-a'-a'''-(1-(p'-g_1))^{1/2}a''(1-(p'-g_1))^{1/2}\|<2{\varepsilon},$$
  and therefore,  by Theorem 2.1 (1) and Theorem 2.1 (4) (twice),
\begin{eqnarray}
\label{Eq:eq1}
  &&\langle (a-10\varepsilon)_+\rangle\nonumber\\
  &&\leq \langle (a'-4{\varepsilon})_+\rangle + \langle (a'''-4{\varepsilon})_+ \rangle+\langle (1-(p'-g_1))^{1/2}a''(1-(p'-g_1))^{1/2}\rangle\nonumber\\
 &&\leq \langle (a'-4{\varepsilon})_+\rangle + \langle (a'''-4{\varepsilon})_+ \rangle+ \langle 1-(p'-g_1)\rangle\nonumber\\
&&\leq \langle (a'-4{\varepsilon})_+\rangle + \langle (a'''-4{\varepsilon})_+ \rangle+  \langle c  \rangle ~(\rm{ by} ~(3)')\nonumber\\
&&\leq (k+1)(m+1)\langle b_1 \rangle+ (k+1)(m+1)\langle b_2 \rangle=(k+1)(m+1)\langle b_1\oplus b_2\rangle.\nonumber
\end{eqnarray}

These are  the desired inequalities, with $b_1\oplus b_2$ in place of $b$ and  $10\varepsilon$ in place of $\varepsilon$.

\textbf{(1.1.2)} If $\langle  (a'-4{\varepsilon})_+ \rangle$  is  equal  to $(k+1)(m+1)\langle b_1 \rangle$,  so that   $(k+1)(m+1)\langle b_1 \rangle \leq \langle (a'-3{\varepsilon})_+ \rangle $, then, as $m+1\geq 2$, we have $k\langle b_1\oplus b_1 \rangle \leq \langle (a'-3{\varepsilon})_+ \rangle $. Also,
 $\langle (a'-4{\varepsilon})_+\rangle +\langle  b_1 \rangle \leq  (k+1)(m+1)\langle b_1\oplus b_1\rangle $.

As  in the part   \textbf{(1.1.1)},  as $A\in \rm {WTA}\Omega$ (with $b_1$ in place of $c$ in the part \textbf{(1.1.1)}), there
exist a  projection $p''\in A$, an  element $g_2\in A$ with $0\leq g_2\leq 1$,  and a $C^*$-subalgebra $D_1$ of $A$ with $g_3\in D_1$,
$1_{D_1}=p''$, and $D_1\in \Omega$, and there exists a  positive element $a^{(4)}\in D_1$ such that
 $$\|a''-a^{(4)}-(1-(p''-g_2))^{1/2}a''(1-(p''-g_2))^{1/2}\|<{\varepsilon},
 \hspace{0.8cm}(\textbf{3.10.3}) $$
  $$\langle (a^{(4)}-3{\varepsilon})_+\rangle\leq \langle  a \rangle,~~~ {\rm {and}}~~~\langle 1-(p''-g_2) \rangle\leq \langle b_1 \rangle.$$

 Since  $D_1$ is   $m$-almost divisible, and $(a^{(4)}-3{\varepsilon})_+\in D_1$,  there exists  $b_3\in (D_1)_+$ such that $k\langle b_3 \rangle \leq \langle (a^{(4)}-3{\varepsilon})_+ \rangle $
 and $\langle  (a^{(4)}-4{\varepsilon})_+ \rangle \leq  (k+1)(m+1)\langle b_3 \rangle$.

  Then, as in the part  \textbf{(1.1.1)},
\begin{eqnarray}
\label{Eq:eq1}
  &&k\langle  b_1\oplus b_1\oplus b_3 \rangle= k\langle b_1\oplus b_1 \rangle + k\langle b_3 \rangle \nonumber\\
  &&\leq \langle (a'-3{\varepsilon})_+ \rangle +\langle (a^{(4)}-3{\varepsilon})_+\rangle \nonumber\\
  &&\leq \langle (a'-2{\varepsilon})_+ \rangle +\langle (a^{(4)}-3{\varepsilon})_+\rangle \nonumber\\
&&\leq\langle  a \rangle+\langle  a \rangle.\nonumber
\end{eqnarray}

By $(\textbf{3.10.1})$ and $(\textbf{3.10.3})$,
 $$\|a-a'-a^{(4)}-(1-(p''-g_2))^{1/2}a''(1-(p''-g_2))^{1/2}\|<2{\varepsilon},$$
 and  therefore, as in the part \textbf{(1.1.1)}
\begin{eqnarray}
\label{Eq:eq1}
  &&\langle (a-10\varepsilon)_+ \rangle \nonumber\\
  &&\leq \langle (a'-4{\varepsilon})_+ \rangle+\langle (a^{(4)}-4{\varepsilon})_+ \rangle  +\langle (1-(p''-g_2))^{1/2}a''(1-(p''-g_2))^{1/2}\rangle \nonumber\\
  &&\leq \langle (a'-4{\varepsilon})_+ \rangle +\langle  (a^{(4)}-4{\varepsilon})_+ \rangle +\langle 1-(p''-g_2) \rangle \nonumber\\
&&\leq \langle (a'-4{{\varepsilon}})_+ \rangle+\langle b_1 \rangle+\langle (a^{(4)}-{4\varepsilon})_+ \rangle\nonumber\\
&&\leq (k+1)(m+1)\langle b_1\oplus b_1 \rangle +(k+1)(m+1)\langle b_3 \rangle\nonumber\\
&&= (k+1)(m+1)\langle b_1\oplus b_1\oplus b_3\rangle.\nonumber
\end{eqnarray}

These are  the desired inequalities, with $b_1\oplus b_1\oplus b_3$ in place of $b$  and  $10\varepsilon$ in place of $\varepsilon$.

\textbf{(1.2)} We assume that $(a'-4{\varepsilon})_+$ is not Cuntz equivalent to a projection.
 By Theorem 2.1 (3), there is a non-zero positive element $d$  such that
 $\langle (a'-5{\varepsilon})_+ \rangle +\langle d \rangle \leq \langle  (a'-4{\varepsilon})_+ \rangle$.

As in the part  \textbf{(1.1.1)}, as $A\in \rm {WTA}\Omega$ (with $d$ in place of $c$ in the part  \textbf{(1.1.1)}), there
exist a  projection $p'''\in A$, an  element $g_3\in A$ with $0\leq g_3\leq 1$,  and a $C^*$-subalgebra $D_2$ of $A$ with
$g_2\in D_2$, $1_{D_2}=p'''$, and $D_2\in \Omega$, and there exists a positive element $a^{(5)}\in D_2$ such that
 $$\|a''-a^{(5)}-(1-(p'''-g_3))^{1/2}a''(1-(p'''-g_3))^{1/2}\|<{\varepsilon},\hspace{0.8cm}(\textbf{3.10.4})  $$
  $$\langle (a^{(5)}-3{\varepsilon})_+\rangle\leq \langle  a \rangle,~~~ {\rm {and}}~~~\langle 1-(p'''-g_3) \rangle\leq \langle d \rangle.$$

   Since  $D_2$  is $m$-almost divisible, and $(a^{(5)}-3{\varepsilon})_+\in D_2$,  there exists  $b_4\in (D_2)_+$ such that $k\langle b_4 \rangle\leq \langle (a^{(5)}-3{\varepsilon})_+\rangle$,
 and $ \langle (a^{(5)}-4{\varepsilon})_+\rangle \leq (k+1)(m+1)\langle b_4 \rangle$.

   Then, as in the part  \textbf{(1.1.1)},
\begin{eqnarray}
\label{Eq:eq1}
  &&k\langle  (b_1\oplus b_4) \rangle= k\langle b_1 \rangle+k\langle b_4 \rangle\nonumber\\
  &&\leq \langle (a'-3{\varepsilon})_+ \rangle+\langle (a^{(5)}-3{\varepsilon})_+ \rangle \nonumber\\
  &&\leq \langle (a'-2{\varepsilon})_+ \rangle+\langle (a^{(5)}-3{\varepsilon})_+ \rangle \nonumber\\
&&\leq \langle  a \rangle+\langle  a \rangle,\nonumber
\end{eqnarray}

By $(\textbf{3.10.1})$ and $(\textbf{3.10.4})$,
 $$\|a-a'-a^{(5)}-(1-(p'''-g_3))^{1/2}a''(1-(p'''-g_3))^{1/2}\|<2{\varepsilon},$$
  and therefore, as in the part \textbf{(1.1.1)},
\begin{eqnarray}
\label{Eq:eq1}
  &&\langle (a-11\varepsilon)_+\rangle\nonumber\\
  &&\leq \langle (a'-5{\varepsilon})_+ \rangle +\langle (a^{(5)}-4{\varepsilon})_+  \rangle +\langle (1-(p'''-g_3))^{1/2}a''(1-(p'''-g_3))^{1/2}\rangle \nonumber\\
  &&\leq \langle  (a'-5{\varepsilon})_+ \rangle+\langle (a^{(5)}-4{\varepsilon})_+ \rangle  +\langle  1-(p'''-g_3)\rangle\nonumber\\
  &&\leq \langle (a'-5{\varepsilon})_+ \rangle+ \langle d\rangle+\langle (a^{(5)}-4{\varepsilon})_+\rangle  \nonumber\\
&&\leq \langle  (a'-4{\varepsilon})_+ \rangle +\langle  (a^{(5)}-4{\varepsilon})_+ \rangle\nonumber\\
&&\leq (k+1)(m+1)\langle b_1\oplus b_4\rangle.\nonumber
\end{eqnarray}

These are  the desired inequalities, with $ b_1\oplus b_4$ in place of $b$ and  $11\varepsilon$ in place of $\varepsilon$.

\textbf{ Case (2)} We assume that $(a'-3{\varepsilon})_+$ is not Cuntz equivalent to a projection.

 By $(3)$ of
 Theorem 2.1, there is a non-zero positive element $e$  such that
 $\langle  (a'-4{\varepsilon})_+ \rangle+ \langle e\rangle \leq \langle(a'-3{\varepsilon})_+ \rangle$.

As in the part  \textbf{(1.1.1)}, as $A\in \rm {WTA}\Omega$ (with $e$ in place of $c$ in the part  \textbf{(1.1.1)}), there exist a  projection $p''''\in A$, an  element $g_4\in A$ with $0\leq g_4\leq 1$,  and a $C^*$-subalgebra $D_3$ of $A$ with
$g\in D_3$, $1_{D_3}=p''''$, and $D_3\in \Omega$, and there exists a  positive element $a^{(6)}\in D_3$ such that
  $$\|a''-a^{(6)}-(1-(p''''-g_4))^{1/2}a''(1-(p''''-g_4))^{1/2}\|<{\varepsilon},
  \hspace{0.8cm}(\textbf{3.10.5})  $$
  $$\langle (a^{(6)}-3{\varepsilon})_+\rangle\leq \langle  a \rangle,~~~ {\rm {and}}~~~\langle 1-(p''''-g_4) \rangle\leq \langle e \rangle.$$

  Since  $D_3$  is   $m$-almost divisible, and $(a^{(6)}-3{\varepsilon})_+\in D_3$,  there exists  $b_5\in (D_3)_+$ such that $k \langle b_5 \rangle\leq \langle  (a^{(6)}-3{\varepsilon})_+ \rangle$,
 and $ \langle (a^{(6)}-4{\varepsilon})_+\rangle\leq (k+1)(m+1)\langle b_5\rangle$.

    Then, as in  the part \textbf{(1.1.1)},
\begin{eqnarray}
\label{Eq:eq1}
  &&k\langle  (b'\oplus b_5) \rangle= k\langle b' \rangle + k\langle b_5 \rangle\nonumber\\
  &&\leq \langle (a'-2{\varepsilon})_+ \rangle +\langle (a^{(6)}-3{\varepsilon})_+\rangle  \nonumber\\
&&\leq \langle  a \rangle+\langle  a \rangle,\nonumber
\end{eqnarray}

By $(\textbf{3.10.1})$ and $(\textbf{3.10.5})$,
 $$\|a-a'-a^{(6)}-(1-(p''''-g_4))^{1/2}a''(1-(p''''-g_4))^{1/2}\|<2{\varepsilon},$$
  and therefore,  as before,
\begin{eqnarray}
\label{Eq:eq1}
  &&\langle (a-10\varepsilon)_+ \rangle \nonumber\\
  &&\leq \langle  (a'-4{\varepsilon})_+ \rangle+ \langle (a^{(6)}-4{\varepsilon})_+ \rangle + \langle (1-(p''''-g_4))^{1/2}a''(1-(p''''-g_4))^{1/2} \rangle\nonumber\\
  &&\leq  \langle (a'-4{\varepsilon})_+ \rangle+ \langle (a^{(6)}-4{\varepsilon})_+  \rangle+ \langle 1-(p''''-g_4) \rangle\nonumber\\
&&\leq \langle (a'-4{\varepsilon})_+ \rangle+\langle e \rangle+\langle (a^{(6)}-4{\varepsilon})_+ \rangle \nonumber\\
&&\leq \langle (a'-3{\varepsilon})_+ \rangle+ \langle (a^{(6)}-4{\varepsilon})_+ \rangle \nonumber\\
&&\leq  (k+1)(m+1)\langle b'\oplus b_5 \rangle.\nonumber
\end{eqnarray}

These are  the desired inequalities, with $b'\oplus b_5$ in place of $b$ and  $10\varepsilon$ in place of $\varepsilon$.
\end{proof}

The proof of Theorem \ref {thm:3.11} which follows  is similar to that of  Theorem \ref {thm:3.10}.

 \begin{theorem}\label{thm:3.11} Let $\Omega$ be a class of  unital
${\rm C^*}$-algebras  which are  $m$-almost divisible (Definition \ref {def:2.7}).  Let $A\in{\rm TAA}\Omega$ be a simple unital stably finite ${\rm C^*}$-algebra such that for any $n\in \mathbb{N}$ and any  unital hereditary
 ${\rm C^*}$-subalgebra $D$ of  $\rm{M}$$_n(A)$, $D$  belongs to the class ${\rm TAA}\Omega$. Then
   $A$  is weakly $(2, m)$-almost divisible (Definition \ref {def:2.8}).\end{theorem}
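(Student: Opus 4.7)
The plan is to mirror the proof of Theorem \ref{thm:3.10}, exploiting the fact that the TAA$\Omega$ property produces a genuine projection $q$ inside some $B\in\Omega$ in place of the near-projection $p-g$ that WTA$\Omega$ supplies. The crucial simplification is that for an honest projection $q$ one has the exact identity
\[
a \;=\; a^{1/2}q a^{1/2} + a^{1/2}(1-q)a^{1/2},
\]
and since $a^{1/2}qa^{1/2}\sim qaq$ and $a^{1/2}(1-q)a^{1/2}\sim (1-q)a(1-q)$, writing $a$ as a sum of two positive elements yields
\[
\langle a\rangle \;\le\; \langle qaq\rangle + \langle (1-q)a(1-q)\rangle \;\le\; 2\langle a\rangle,
\]
the right-hand inequality coming from $qaq\precsim a$ and $(1-q)a(1-q)\precsim a$. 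This already accounts for the factor $2$ in weak $(2,m)$-almost divisibility. Because the decomposition is exact, no commutator estimate between $q$ and $a$ is needed---correspondingly, Definition \ref{def:1.3} imposes no commutator condition, unlike Definition \ref{def:1.1}.

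After reducing to the case $a\in M_n(A)_+$ with $\|a\|=1$, I would apply the TAA$\Omega$ property of $M_n(A)$ (a unital hereditary $C^*$-subalgebra of itself, hence in TAA$\Omega$ by hypothesis) to $F=\{a\}$, to a small tolerance $\delta_1>0$, and to a carefully chosen nonzero positive element (selected to absorb the cutoff errors introduced below). This yields a $C^*$-subalgebra $B\in\Omega$ of $M_n(A)$ with $1_B=p$, and a projection $q\in B$ satisfying $qa,aq\in_{\delta_1}B$, $\|qaq\|\ge 1-\delta_1$, and $1-q$ Cuntz-dominated by the chosen small element. From the approximation I extract $a'\in B_+$ with $\|qaq-a'\|<\delta_1$; by Theorem \ref{thm:2.1}(1) and the standard cutoff-shift inequality, $(a'-\delta)_+\precsim qaq$ for any $\delta>\delta_1$. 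Applying the $m$-almost divisibility of $B$ to $(a'-\delta)_+$ produces $b_1\in B_+$ with
\[
k\langle b_1\rangle \;\le\; \langle (a'-\delta)_+\rangle \;\le\; \langle qaq\rangle, \qquad \langle (a'-\delta-\delta')_+\rangle \;\le\; (k+1)(m+1)\langle b_1\rangle.
\]

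Next I would consider $D:=(1-q)M_n(A)(1-q)$, a unital hereditary $C^*$-subalgebra of $M_n(A)$ with unit $1-q$; by hypothesis $D\in\mathrm{TAA}\Omega$. A second invocation of TAA$\Omega$, this time inside $D$ with input $(1-q)a(1-q)\in D_+$, supplies a projection $q'$ inside a subalgebra $B_2\in\Omega$ of $D$, an element $a''\in B_2$ approximating $q'(1-q)a(1-q)q'$, and---via $m$-almost divisibility of $B_2$ applied to an appropriate cutoff of $a''$---an element $b_2\in(B_2)_+$ with $k\langle b_2\rangle\le\langle (1-q)a(1-q)\rangle$, together with a companion upper bound on $\langle((1-q)a(1-q)-\text{small})_+\rangle$ in terms of $(k+1)(m+1)\langle b_2\rangle$ plus an error controlled by the second small element. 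Setting $b:=b_1\oplus b_2$, the inequality $k\langle b\rangle\le 2\langle a\rangle$ is then immediate.

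For the second required inequality, I would combine $\langle (a-\varepsilon)_+\rangle\le\langle qaq\rangle+\langle (1-q)a(1-q)\rangle$ (from the decomposition identity) with the two companion bounds above. The main obstacle is the same bookkeeping issue encountered in Theorem \ref{thm:3.10}: the $m$-almost divisibility of $B$ delivers an upper bound on $\langle (a'-\delta-\delta')_+\rangle$ rather than on $\langle qaq\rangle$ itself, so a small tail is missing, and similarly inside $D$. Recovering the tail requires precisely the four-case analysis of Theorem \ref{thm:3.10}---splitting according to whether $(a'-\delta)_+$ and its further cutoff are Cuntz equivalent to projections, applying Theorem \ref{thm:2.1}(2) in the projection case or Theorem \ref{thm:2.1}(3) in the non-projection case to free up a nonzero Cuntz class, and arranging that the small positive elements in the two TAA$\Omega$ invocations dominate these freed-up classes. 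The execution is essentially parallel to Theorem \ref{thm:3.10}, with $p-g$ and $(p-g)^{1/2}$ replaced throughout by $q$; in fact this simplifies many of the estimates, since the norm approximations $\|(p-g)x-x(p-g)\|<\delta$ and $\|(p-g)^{1/2}x-x(p-g)^{1/2}\|<\varepsilon$ that dominate the proof of Theorem \ref{thm:3.10} become unnecessary.
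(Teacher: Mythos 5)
Your proposal is correct and follows essentially the same route as the paper's proof: a first ${\rm TAA}\Omega$ approximation producing $q\in B\in\Omega$, a second one inside the hereditary corner $(1-q)A(1-q)$, $m$-almost divisibility applied in each $\Omega$-algebra, and the same four-case analysis (via Theorem \ref{thm:2.1} (2) and (3)) to recover the missing tail. The only cosmetic difference is your exact decomposition $a=a^{1/2}qa^{1/2}+a^{1/2}(1-q)a^{1/2}$ in place of the paper's approximation of $a-(1-q)a(1-q)=qa+aq-qaq$ by a positive element of $B$; both lead to the same estimates.
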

\begin{proof} We must show that there is $b\in {\rm M}_{\infty}(A)_+$  such that  $k\langle b\rangle \leq  \langle a \rangle+\langle a \rangle$  and $\langle(a-\varepsilon)_+\rangle\leq (k+1)(m+1)\langle b\rangle$,
 for any given  $a\in A_+,$  $\varepsilon>0$, and $k\in \mathbb{N}.$
  We may assume that $\|a\|=1.$ (We have replaced $\rm {M}$$_n(A)$ containing $a$ given initially  by $A$.)

For any  $\delta_2>0$,  since   $A\in{\rm  TAA}\Omega$,  there
exist a  projection $p\in A$,   a $\rm C^*$-subalgebra $B$ of $A$ with
$1_B=p$ and $B\in \Omega$, and  a projection $q\in B$  such that

$(1)$ $qa\in_{{\delta_2/3}} B,~ aq\in_{{\delta_2/3}} B$.

By $(1)$,  $a-(1-q)a(1-q)=(qa+aq-qaq)\in_{{\delta_2}} B$, i.e., there exists $\bar{a}\in B$ such that $\|a-(1-q)a(1-q)-\bar{a}\|<\delta_2$. With sufficiently small $\delta_2$, by the functional calculus, we may assume that  there  exists a  positive element $a'\in B$ such that
 $$\|a-(1-q)a(1-q)-a'\|<\varepsilon. \hspace{0.8cm}(\textbf{3.11.1})$$

 Since  $B$ is   $m$-almost divisible, and $(a'-3\varepsilon)_+\in B$,  there exists  $b_1\in B_+$ such that $k\langle b_1\rangle \leq \langle(a'-3\varepsilon)_+\rangle$
 and $ \langle(a'-4\varepsilon)_+\rangle\leq (k+1)(m+1)\langle b_1\rangle$.

 Since  $B$ is   $m$-almost divisible, and $(a'-2\varepsilon)_+\in B$,  there exists  $b'\in B_+$ such that $k\langle b'\rangle\leq \langle(a'-2\varepsilon)_+\rangle$
 and $ \langle(a'-3\varepsilon)_+\rangle\leq (k+1)(m+1)\langle b'\rangle$.

We  divide the proof into two cases.

\textbf{Case (1)}  We assume that $(a'-3\varepsilon)_+$ is Cuntz equivalent to a projection.

\textbf{(1.1)} We assume that $(a'-4\varepsilon)_+$ is Cuntz equivalent to a projection.

\textbf{(1.1.1)}  If  $\langle(a'-4\varepsilon)_+\rangle$, the class of a projection,  is not equal to  $(k+1)(m+1)\langle b_1\rangle$, by Theorem 2.1 (2), there exists a  non-zero $c\in A_+$ such that $\langle (a'-4\varepsilon)_+\rangle+ \langle c\rangle \leq  (k+1)(m+1)\langle b_1\rangle$.

 For  any  $\delta_2>0$,  since   $(1-q)A(1-q)\in{\rm  TAA}\Omega$ (recall that  the present $A$ was initially $\rm {M}$$_n(A)$),  there
exist a  projection $p'\in (1-q)A(1-q)$,   a $\rm C^*$-subalgebra $D$ of $(1-q)A(1-q)$ with
$1_D=p'$ and $D\in \Omega$, and  a projection $q'\in D$  such that

$(1)'$ $q'(1-q)a(1-q) \in_{\delta_2/3}D,~ (1-q)a(1-q)q'\in_{\delta_2/3} D$, and

$(2)'$ $1-q-q'\precsim c$.

%$(3)'$ $\|q'(1-q)a(1-q)q'\|\geq \|(1-q)a(1-q)\|-\delta_2/3$.

 By $(1)'$,  as above,  there exists a positive element $a'''\in B$  such that
 $$\|(1-q)a(1-q)-a'''-(1-q-q')(1-q)a(1-q)(1-q-q')\|<\varepsilon. \hspace{0.4cm}(\textbf{3.11.2})$$

By $(\textbf{3.11.1})$ and $(\textbf{3.11.2})$, one has
$$\|a-a'-a'''-(1-q-q')(1-q)a(1-q)(1-q-q')\|<2\varepsilon. \hspace{0.4cm}(\textbf{3.11.3})$$

  Since  $D$ is  $m$-almost divisible, and $(a'''-3\varepsilon)_+\in D$,  there exists  $b_2\in D_+$ such that $k\langle b_2\rangle \leq \langle  (a'''-3\varepsilon)_+\rangle $
 and $ \langle (a'''-4\varepsilon)_+\rangle \leq  (k+1)(m+1)\langle b_2\rangle $.

Since $a'\leq a'+a'''+(1-q-q')(1-q)a(1-q)(1-q-q')$, by Theorem 2.1 (5), $\langle (a'-2{\varepsilon})_+\rangle\leq \langle (a'+a'''+(1-q-q')(1-q)a(1-q)(1-q-q')-2{\varepsilon})_+\rangle$, and by $(\textbf{3.11.3})$, and Theorem 2.1 (1), $\langle (a'-2{\varepsilon})_+\rangle\leq \langle (a'+a'''+(1-q-q')(1-q)a(1-q)(1-q-q')-2{\varepsilon})_+\rangle\leq \langle a\rangle$.
By   a similar  argument (cf. proof of case (1.1.1) of Theorem 3.10),
 $\langle (a'''-3{\varepsilon})_+\rangle \leq \langle a\rangle$.

 Therefore,  we have
\begin{eqnarray}
\label{Eq:eq1}
  &&k\langle b_1\oplus b_2\rangle=k\langle b_1\rangle +k\langle b_2\rangle\nonumber\\
  &&\leq \langle (a'-3\varepsilon)_+\rangle +\langle (a'''-3\varepsilon)_+\rangle \nonumber\\
  &&\leq \langle (a'-2\varepsilon)_+\rangle +\langle (a'''-3\varepsilon)_+\rangle \nonumber\\
  &&\leq \langle a\rangle+\langle a\rangle.\nonumber
\end{eqnarray}

Since $\|a-a'-a'''-(1-q-q')(1-q)a(1-q)(1-q-q')\|<2\varepsilon,$
therefore,  by Theorem 2.1 (1) and Theorem 2.5 (4) (twice),
\begin{eqnarray}
\label{Eq:eq1}
  &&\langle (a-10\varepsilon)_+\rangle \nonumber\\
  &&\leq \langle (a'-4\varepsilon)_+\rangle +\langle  (a'''-4\varepsilon)_+\rangle + \langle (1-q-q')(1-q)a(1-q)(1-q-q')\rangle\nonumber\\
 &&\leq  \langle (a'-4\varepsilon)_+\rangle +\langle  (a'''-4\varepsilon)_+\rangle + \langle 1-q-q'\rangle\nonumber\\
&&\leq  \langle (a'-4\varepsilon)_+\rangle +\langle  (a'''-4\varepsilon)_+\rangle +\langle c\rangle ~~(\rm {by} ~(2)')~ \nonumber\\
&&\leq (k+1)(m+1)\langle b_1\rangle + (k+1)(m+1)\langle b_2\rangle =(k+1)(m+1)\langle b_1\oplus b_2\rangle.\nonumber
\end{eqnarray}

These are  the desired inequalities, with $b_1\oplus b_2$ in place of $b$ and $10\varepsilon$ in place of $\varepsilon$.

\textbf{(1.1.2)} If $\langle (a'-4\varepsilon)_+\rangle$  is equal to  $(k+1)(m+1)\langle b_1\rangle$, so that $(k+1)(m+1)\langle b_1\rangle \leq \langle (a'-3\varepsilon)_+\rangle $, then, as $m+1\geq 2$,
we have  $k\langle b_1\oplus b_1 \rangle \leq \langle (a'-3{\varepsilon})_+ \rangle $. Also, $\langle(a'-4{\varepsilon})_+ \rangle +\langle  b_1 \rangle \leq (k+1)(m+1)\langle b_1\oplus b_1\rangle $.

As  in the part \textbf{(1.1.1)}, since $(1-q)A(1-q)\in \rm {TAA}\Omega$ (with $b_1$ in place of $c$ as in the part \textbf{(1.1.1)}),  there
exist a  projection $p''\in (1-q)A(1-q)$,   a $\rm C^*$-subalgebra $D_1$ of $(1-q)A(1-q)$ with
$1_{D_1}=p''$ and $D_1\in \Omega$, and  a projection $q''\in D_1$, and there exists a  positive element $a^{(4)}\in D_1$
such that
 $$\|(1-q)a(1-q)-a^{(4)}-(1-q-q'')(1-q)a(1-q)(1-q-q'')\|<\varepsilon, \hspace{0.4cm}(\textbf{3.11.4})$$
   $$\langle (a^{(4)}-3{\varepsilon})_+\rangle\leq \langle  a \rangle,~~~ {\rm {and}} ~~~\langle 1-q-q'' \rangle\leq \langle b_1 \rangle.$$

 Since  $D_1$ is  $m$-almost divisible, and $(a^{(4)}-3\varepsilon)_+\in D_1$,  there exists  $b_3\in (D_1)_+$ such that $k\langle b_2\rangle \leq \langle  (a^{(4)}-3\varepsilon)_+\rangle$,
 and $\langle (a^{(4)}-4\varepsilon)_+\rangle \leq  (k+1)(m+1)\langle b_3\rangle$.

Then, as in the part \textbf{(1.1.1)},
\begin{eqnarray}
\label{Eq:eq1}
  &&k\langle b_1\oplus b_1\oplus b_3\rangle =k\langle b_1\oplus b_1\rangle + k\langle  b_3\rangle \nonumber\\
  &&\leq \langle(a'-3\varepsilon)_+\rangle +\langle (a^{(4)}-3\varepsilon)_+\rangle \nonumber\\
  &&\leq \langle(a'-2\varepsilon)_+\rangle +\langle (a^{(4)}-3\varepsilon)_+\rangle \nonumber\\
   &&\leq\langle a\rangle+\langle a\rangle.\nonumber
\end{eqnarray}

By $(\textbf{3.11.1})$ and $(\textbf{3.11.4})$, one has
$$\|a-a'-a^{(4)}-(1-q-q'')(1-q)a(1-q)(1-q-q'')\|<2\varepsilon,$$
 and therefore, as in the part \textbf{(1.1.1)},
\begin{eqnarray}
\label{Eq:eq1}
  &&\langle(a-10\varepsilon)_+\rangle\nonumber\\
  &&\leq  \langle (a'-4\varepsilon)_+\rangle +\langle (a^{(4)}-4\varepsilon)_+\rangle + \langle(1-q-q'')(1-q)a(1-q)(1-q-q'')\rangle\nonumber\\
  &&\leq  \langle(a'-4\varepsilon)_+\rangle +\langle (a^{(4)}-4\varepsilon)_+\rangle + \langle 1-q-q''\rangle \nonumber\\
&&\leq  \langle (a'-4\varepsilon)_+\rangle +\langle (a^{(4)}-4\varepsilon)_+\rangle +\langle  b_1\rangle \nonumber\\
&&\leq  (k+1)(m+1)\langle b_1\oplus b_1\rangle+(k+1)(m+1)\langle b_3\rangle\nonumber\\
&&=(k+1)(m+1)\langle b_1\oplus b_1\oplus b_3\rangle.\nonumber
\end{eqnarray}

These are  the desired inequalities, with $ b_1\oplus b_1\oplus b_3$ in place of $b$ and $10\varepsilon$ in place of $\varepsilon$.

\textbf{(1.2)} We assume that $ (a'-4\varepsilon)_+$ is not Cuntz equivalent to a projection.
 By  Theorem 2.1 (3), there is a non-zero positive element $d$  such that
 $\langle (a'-5\varepsilon)_+\rangle +\langle d\rangle \leq \langle (a'-4\varepsilon)_+\rangle$.

As  in the part \textbf{(1.1.1)}, since $(1-q)A(1-q)\in \rm {TAA}\Omega$ (with $d$ in place of $c$ as in the part \textbf{(1.1.1)}), there
exist a projection $p'''\in (1-q)A(1-q)$,   a $\rm C^*$-subalgebra $D_2$ of $(1-q)A(1-q)$ with
$1_{D_2}=p'''$ and $D_2\in \Omega$, and  a projection $q'''\in D_2$, and there exists a  positive element $a^{(5)}\in D_2$,
such that
 $$\|(1-q)a(1-q)-a^{(5)}-(1-q-q''')(1-q)a(1-q)(1-q-q''')\|<\varepsilon, \hspace{0.4cm}(\textbf{3.11.5})$$
   $$\langle (a^{(5)}-3{\varepsilon})_+\rangle\leq \langle  a \rangle,~~~ {\rm {and}} ~~~\langle 1-q-q''' \rangle\leq \langle d \rangle.$$

 Since  $D_2$ is  $m$-almost divisible, and $(a^{(5)}-3\varepsilon)_+\in D_2$,  there exists  $b_4\in (D_2)_+$ such that $k\langle b_4\rangle \leq \langle  (a^{(5)}-3\varepsilon)_+\rangle $
 and $\langle (a^{(5)}-4\varepsilon)_+\rangle \leq (k+1)(m+1)\langle b_4\rangle$.

As in  the part \textbf{(1.1.1)}, one has
\begin{eqnarray}
\label{Eq:eq1}
  &&k\langle b_1\oplus b_4\rangle = k\langle b_1\rangle + k\langle b_4\rangle\nonumber\\
  &&\leq \langle (a'-3\varepsilon)_+\rangle +\langle (a^{(5)}-3\varepsilon)_+\rangle \nonumber\\
   &&\leq \langle (a'-2\varepsilon)_+\rangle +\langle (a^{(5)}-3\varepsilon)_+\rangle \nonumber\\
   &&\leq \langle a\rangle+\langle a\rangle.\nonumber
\end{eqnarray}

By $(\textbf{3.11.1})$ and $(\textbf{3.11.5})$, one has
$$\|a-a'-a'''-(1-q-q''')(1-q)a(1-q)(1-q-q''')\|<2\varepsilon,$$
and therefore, as in the part \textbf{(1.1.1)},
\begin{eqnarray}
\label{Eq:eq1}
  &&\langle(a-11\varepsilon)_+\rangle\nonumber\\
  &&\leq \langle (a'-5\varepsilon)_+\rangle +\langle  (a^{(5)}-4\varepsilon)_+\rangle + \langle (1-q-q')(1-q)a(1-q)(1-q-q')\rangle \nonumber\\
  &&\leq \langle (a'-5\varepsilon)_+\rangle +\langle  (a^{(5)}-4\varepsilon)_+\rangle + \langle 1-q-q'\rangle \nonumber\\
&&\leq \langle (a'-5\varepsilon)_+\rangle +\langle (a^{(5)}-4\varepsilon)_+ \rangle +\langle d\rangle\nonumber\\
&&\leq  \langle (a'-4\varepsilon)_+\rangle +\langle  (a^{(5)}-4\varepsilon)_+ \rangle \nonumber\\
&&\leq (k+1)(m+1)\langle b_1\rangle + (k+1)(m+1)\langle b_4\rangle = (k+1)(m+1)\langle b_1\oplus b_4\rangle.\nonumber
\end{eqnarray}

These are  the desired inequalities, with $b_1\oplus b_4$ in place of $b$ and  $11\varepsilon$ in place of $\varepsilon$.

\textbf{Case (2)} We assume that  $(a'-3\varepsilon)_+$ is not Cuntz equivalent to a projection.

 By Theorem 2.1 (3), there is a non-zero positive element $e$  such that
 $\langle (a'-4\varepsilon)_+\rangle +\langle e\rangle \leq \langle (a'-3\varepsilon)_+\rangle$.

As  in the part \textbf{(1.1.1)},  since $(1-q)A(1-q)\in \rm {TAA}\Omega$ (with $e$ in place of $c$ as in the part \textbf{(1.1.1)}), there
exist a  projection $p''''\in (1-q)A(1-q)$,   a $\rm C^*$-subalgebra $D_3$ of $(1-q)A(1-q)$ with
$1_{D_3}=p''''$ and $D_3\in \Omega$, and  a projection $q''''\in D_3$, and there exists a  positive element $a^{(6)}\in D_3$,
such that
 $$\|(1-q)a(1-q)-a^{(6)}-(1-q-q')(1-q)a(1-q)(1-q-q')\|<\varepsilon, \hspace{0.4cm}(\textbf{3.11.6})$$
   $$\langle (a^{(6)}-3{\varepsilon})_+\rangle\leq \langle  a \rangle,~~~ {\rm {and}} ~~~\langle 1-q-q'''' \rangle\leq \langle e \rangle.$$

 Since  $D_3$ is  $m$-almost divisible, and $(a^{(6)}-3\varepsilon)_+\in D_3$,  there exists  $b_5\in (D_3)_+$ such that $k\langle b_5\rangle \leq \langle (a^{(6)}-3\varepsilon)_+\rangle$
 and $\langle (a^{(6)}-4\varepsilon)_+\rangle \leq  (k+1)(m+1)\langle b_2\rangle$.

As in the part \textbf{(1.1.1)},
\begin{eqnarray}
\label{Eq:eq1}
  &&k\langle b_1'\oplus b_5\rangle =k\langle b_1'\rangle + k\langle b_5\rangle \nonumber\\
  &&\leq \langle (a'-2\varepsilon)_+\rangle +\langle (a^{(6)}-3\varepsilon)_+\rangle  \nonumber\\
 &&\leq \langle a\rangle+\langle a\rangle.\nonumber
\end{eqnarray}

By $(\textbf{3.11.1})$ and $(\textbf{3.11.6})$, one has
$$\|a-a'-a^{(6)}-(1-q-q'''')(1-q)a(1-q)(1-q-q'''')\|<2\varepsilon,$$
and therefore, as before,
\begin{eqnarray}
\label{Eq:eq1}
  &&\langle (a-10\varepsilon)_+\rangle \nonumber\\
  &&\leq \langle (a'-4\varepsilon)_+\rangle +\langle (a^{(6)}-4\varepsilon)_+\rangle + \langle (1-q-q'''')(1-q)a(1-q)(1-q-q'''')\rangle \nonumber\\
  &&\leq  \langle (a'-4\varepsilon)_+\rangle +\langle (a^{(6)}-4\varepsilon)_+\rangle + \langle 1-q-q''''\rangle \nonumber\\
&&\leq \langle (a'-4\varepsilon)_+\rangle +\langle(a^{(6)}-4\varepsilon)_+ \rangle +\langle e \rangle\nonumber\\
&&\leq \langle (a'-3\varepsilon)_+\rangle +\langle  (a^{(6)}-4\varepsilon)_+\rangle  \nonumber\\
&&\leq (k+1)(m+1)\langle b_1'\rangle + (k+1)(m+1)\langle b_5\rangle =(k+1)(m+1)\langle b_1'\oplus b_5\rangle.\nonumber
\end{eqnarray}

These are  the desired inequalities, with  $b_1'\oplus b_5$ in place of $b$ and  $10\varepsilon$ in place of $\varepsilon$.
\end{proof}

\emph{\textbf{Acknowledgements:}} The  research of the first   author was    supported by a grant from  the Natural Sciences and Engineering Research Council of Canada. The research of the third   author was  supported by a grant from the National Natural Sciences Foundation of China (Grant No. ~ 11871375).

 \end{document}